\newcommand\FF{{\mathbb F}}
\newcommand\GG{{\mathbb G}}
\newcommand\NN{{\mathbb N}}
\newcommand\ZZ{{\mathbb Z}}
\newcommand\CC{{\mathbb C}}
\newcommand\TT{{\mathbb T}}
\newcommand{\dumot}{{\mathfrak{M}}}
\newcommand{\mot}{{\mathsf{M}}}
\newcommand\pfrak{{\mathfrak p}}
\newcommand\pitilde{\tilde{\pi}}
\newcommand\id{{\rm id}}
\newcommand{\one}{\mathds{1}}
\newcommand{\transp}[1]{#1^{\rm tr}}   
\def\pr{{\rm pr}}
\newcommand{\hd}[2]{\partial_t^{(#1)}\!\!\left(#2\right)}
\newcommand{\hde}[1]{\partial_t^{(#1)}}  
\newcommand{\tate}{\CC_\infty\langle t\rangle}  
\newcommand{\laurent}{\CC_\infty(\!(t)\!)}  
\newcommand{\powser}{\CC_\infty[\![t]\!]}  
\newcommand{\ps}[1]{[\![#1]\!]}  
\newcommand{\ls}[1]{(\!(#1)\!)}
\def\vect#1{\text{\boldmath $#1$\unboldmath}} 
\def\isom{\cong}
\DeclareMathOperator{\Hom}{Hom}
\DeclareMathOperator{\End}{End}
\DeclareMathOperator{\ch}{char}
\DeclareMathOperator{\Mat}{Mat}
\DeclareMathOperator{\GL}{GL}
\DeclareMathOperator{\diag}{diag}
\DeclareMathOperator{\Lie}{Lie}
\def\markdef{\bf }
\theoremstyle{plain}
\newtheorem{thm}{Theorem}[section]
\newtheorem{cor}[thm]{Corollary}
\newtheorem{lem}[thm]{Lemma}
\newtheorem{prop}[thm]{Proposition}
\theoremstyle{definition}
\newtheorem{defn}[thm]{Definition}
\newtheorem{rem}[thm]{Remark}
\begin{document}

\title[Prolongations and algebraic independence]{Prolongations of $t$-motives and algebraic independence of periods}
\author{Andreas Maurischat}
\address{\rm {\bf Andreas Maurischat}, Lehrstuhl A f\"ur Mathematik, RWTH Aachen University, Germany }
\email{\sf andreas.maurischat@matha.rwth-aachen.de}



\begin{abstract}
In this article we show that the coordinates of a period lattice generator of the $n$-th tensor
power of the Carlitz module are algebraically independent, if $n$ is prime to the characteristic. 
The main part of the paper, however, is devoted to a general construction for $t$-motives which we call \textit{prolongation}, and which gives the necessary background for our proof of the algebraic independence. Another ingredient is a theorem which shows hypertranscendence for the Anderson-Thakur function $\omega(t)$, i.e.~that $\omega(t)$ and all its hyperderivatives with respect to $t$ are algebraically independent.
\end{abstract}

\maketitle

\setcounter{tocdepth}{1}
\tableofcontents

\section{Introduction}

Periods of $t$-modules play a central role in number theory in positive characteristic, and
questions about their algebraic independence are of major interest.
The most prominent period is the Carlitz period
\[ \tilde{\pi}=\lambda_\theta \theta \prod_{j \geq 1} (1 - \theta^{1-q^j})^{-1} \in K_\infty(\lambda_\theta), \]
where $\lambda_\theta\in \bar{K}$ is a $(q-1)$-th root of $-\theta$.
Here, $K=\FF_q(\theta)$ is the rational function field over the finite field $\FF_q$, $\bar{K}$ its algebraic closure, and $K_\infty$ is the completion of $K$ with respect to the absolute value
$|\cdot|_\infty$ given by $|\theta|_\infty=q$. 

The Carlitz period is the function field analog of the complex number $2\pi i$, and it was already proven 
by Wade in 1941 that $\tilde{\pi}$ is transcendental over $K$ (see~\cite{liw:cqtg}).

For proving algebraic independence of periods (and other ``numbers'' like zeta values and logarithms) the ABP-criterion (cf.~\cite[Thm.~3.1.1]{ga-wb-mp:darasgvpc}) and a consequence of it - which is part of the proof of \cite[Thm.~5.2.2]{mp:tdadmaicl} - turned out to be very useful. To state this consequence, let $\CC_\infty$ denote the completion of the algebraic closure of $K_\infty$, and
$\powser$ the power series ring over $\CC_\infty$, as well as 
$\TT=\tate$ the subring consisting of those power series which converge on the closed unit disc $|t|_\infty\leq 1$. 
Finally, let $\mathbb{E}$ be the subring of entire functions, i.e.~of those power series which converge for all $t\in \CC_\infty$ and whose coefficients lie in a finite extension of $K_\infty$.
On $\TT$ we consider the inverse Frobenius twist $\sigma$ given by
\[ \sigma( \sum_{i=0}^\infty x_it^i)=\sum_{i=0}^\infty (x_i)^{1/q}t^i, \]
which will be applied on matrices entry-wise.

\begin{thm} (See proof of \cite[Thm.~5.2.2]{mp:tdadmaicl})  \label{thm:conseq-of-abp}
\footnote{Note that the difference equation in \cite{mp:tdadmaicl} is given as $\sigma(\Psi)=\Phi\Psi$ from which our version is obtained by transposing the matrices. We use this transposed version as it fits better to our convention on notation (cf.~Sect.~\ref{subsec:conventions}).}

Let $\Phi\in \Mat_{r\times r}(\bar{K}[t])$ be a matrix with
determinant $\det(\Phi)=c(t-\theta)^s$ for some $c\in \bar{K}^\times$ and $s\geq 1$.
If $\Psi\in \GL_r(\TT)\cap \Mat_{r\times r}(\mathbb{E})$ is a matrix such that
\[  \sigma(\Psi)=\Psi \Phi, \]
then the transcendence degree of $\bar{K}(t)(\Psi)$ over $\bar{K}(t)$ is the same as the 
transcendence degree of $\bar{K}(\Psi(\theta))$ over $\bar{K}$.\\
Here, $\bar{K}(t)(\Psi)$ denotes the field extension of $\bar{K}(t)$ generated by the entries of
$\Psi$, and $\bar{K}(\Psi(\theta))$ denotes the field extension of $\bar{K}$ generated by the entries of
$\Psi(\theta)$, the evaluation of the entries of $\Psi$ at $t=\theta$.
\end{thm}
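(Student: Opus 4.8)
The plan is to interpose a single algebraic group between the two transcendence degrees, namely the difference Galois group $\Gamma_\Psi$ of the equation $\sigma(\Psi)=\Psi\Phi$, and to let the ABP-criterion govern the behaviour under specialization at $t=\theta$. From Papanikolas' theory \cite{mp:tdadmaicl} the pair $(\Phi,\Psi)$ gives rise to a rigid analytically trivial pre-$t$-motive whose Tannakian group is realized as an $\FF_q(t)$-subgroup scheme $\Gamma_\Psi\subseteq\GL_r$, cut out by the polynomial relations over $\FF_q(t)$ among the entries of $\Psi_1^{-1}\Psi_2$ (two independent copies of $\Psi$). Let $\Sigma_\Psi=\bar K(t)[\Psi_{ij},1/\det\Psi]$ be the $\bar K(t)$-algebra generated by the entries of $\Psi$ and by $1/\det\Psi$; then $\mathrm{Spec}(\Sigma_\Psi)$ is a $\Gamma_\Psi$-torsor over $\bar K(t)$ with function field $\bar K(t)(\Psi)$. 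Since a torsor under $\Gamma_\Psi$ has dimension $\dim\Gamma_\Psi$ and $\Sigma_\Psi$ is a domain, this gives $\mathrm{trdeg}_{\bar K(t)}\bar K(t)(\Psi)=\dim\Gamma_\Psi$. It therefore suffices to prove $\mathrm{trdeg}_{\bar K}\bar K(\Psi(\theta))=\dim\Gamma_\Psi$.

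The inequality $\mathrm{trdeg}_{\bar K}\bar K(\Psi(\theta))\leq\dim\Gamma_\Psi$ is elementary and does not use the determinant hypothesis. Since the entries of $\Psi$ lie in $\mathbb{E}$, they are defined at $t=\theta$ and evaluation is a ring homomorphism. Given any polynomial relation over $\bar K(t)$ among the entries of $\Psi$, I would clear denominators to obtain coefficients in $\bar K[t]$, divide out the largest power of $(t-\theta)$ dividing all of them, and then specialize at $t=\theta$; the outcome is a nontrivial relation over $\bar K$ among the entries of $\Psi(\theta)$. Hence any algebraically independent family of values among the $\Psi(\theta)_{ij}$ forces the corresponding functions to be algebraically independent over $\bar K(t)$, which yields $\mathrm{trdeg}_{\bar K}\bar K(\Psi(\theta))\leq\mathrm{trdeg}_{\bar K(t)}\bar K(t)(\Psi)=\dim\Gamma_\Psi$.

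For the reverse inequality I would show that $\Psi(\theta)$ is a sufficiently generic point of the special fibre of the torsor, and this is exactly where the ABP-criterion \cite[Thm.~3.1.1]{ga-wb-mp:darasgvpc} enters. An arbitrary algebraic relation over $\bar K$ among the entries of $\Psi(\theta)$ can be rewritten as a $\bar K$-linear relation among the entries of a solution matrix attached to a tensor (or symmetric) power of the motive, governed by a matrix $\Phi^{(d)}$ built from $\Phi$. The determinant hypothesis $\det\Phi=c(t-\theta)^s$ is precisely what propagates through these constructions: the matrix $\Phi^{(d)}$ again has determinant a nonzero constant times a power of $(t-\theta)$, so the ABP-criterion applies and shows that every such $\bar K$-linear relation at $t=\theta$ is the specialization of a $\bar K[t]$-linear relation among the coordinate functions. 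Transported back, every $\bar K$-algebraic relation among the entries of $\Psi(\theta)$ descends from a $\bar K(t)$-algebraic relation among the entries of $\Psi$; consequently the $\bar K$-Zariski closure of $\Psi(\theta)$ in $\GL_r$ has the full dimension $\dim\Gamma_\Psi$, which gives $\mathrm{trdeg}_{\bar K}\bar K(\Psi(\theta))\geq\dim\Gamma_\Psi$ and hence equality throughout.

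The crux of the argument is this reverse inequality, i.e.\ the assertion that specialization at $t=\theta$ introduces no new algebraic relations among the periods. This is not formal: it is delivered by the ABP-criterion, and the determinant condition on $\Phi$ is not a matter of convenience but exactly the hypothesis that keeps the criterion applicable to all the tensor powers needed to upgrade a statement about linear independence into one about algebraic independence. The bookkeeping that turns ``no new linear relation in any tensor power'' into ``the orbit closure of $\Psi(\theta)$ has the full dimension $\dim\Gamma_\Psi$'' is the delicate point, and it is handled through the Tannakian formalism underlying \cite{mp:tdadmaicl}.
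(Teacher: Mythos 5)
Your proposal is correct and follows essentially the same route as the paper's proof, which is here given only by reference: the paper cites the proof of \cite[Thm.~5.2.2]{mp:tdadmaicl} rather than arguing independently, and your outline --- identifying $\mathrm{trdeg}_{\bar{K}(t)}\bar{K}(t)(\Psi)$ with $\dim\Gamma_\Psi$ via the torsor $\mathrm{Spec}(\Sigma_\Psi)$, the elementary specialization inequality obtained by clearing powers of $(t-\theta)$, and the lifting of algebraic relations at $t=\theta$ through tensor/symmetric powers by the criterion of \cite[Thm.~3.1.1]{ga-wb-mp:darasgvpc} --- is precisely Papanikolas' argument. The only step you leave schematic (converting ``no new linear relations in each graded piece'' into the dimension count for the Zariski closure of $\Psi(\theta)$) is exactly the bookkeeping carried out in \cite{mp:tdadmaicl}, so there is no gap in the approach itself.
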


Actually, the matrix $\Phi$ occurs as a matrix which represents the $\sigma$-action on a dual $t$-motive $\dumot$ with respect to some $\bar{K}[t]$-basis of $\dumot$, and $\Psi$ is the corresponding rigid analytic trivialization.

Using this statement, one can also reprove the transcendence of $\pitilde$ by using the power series
\[  \Omega(t)= \lambda_\theta^{-q} \prod_{j \geq 1} (1 - \frac{t}{\theta^{q^j}}) \in \mathbb{E}. \]
This power series satisfies the difference equation $\sigma(\Omega)=\Omega\cdot  (t-\theta)$ and is indeed the
rigid analytic trivialization of the dual Carlitz motive $\mathfrak{C}$.
The function $ \Omega$ is transcendental over $\bar{K}(t)$ - as it has infinitely many zeros - and
\[ \Omega(\theta)=\Omega|_{t=\theta}=  \lambda_\theta^{-q} \prod_{j \geq 1} (1 - \frac{\theta}{\theta^{q^j}})=-\frac{1}{\theta \lambda_\theta} \prod_{j \geq 1} (1 - \theta^{1-q^j})
=-\frac{1}{\pitilde}.
\]
Hence by the criterion, $\pitilde$ is transcendental over $\bar{K}$.

Several proofs on algebraic independence (see e.g.~\cite{cc-mp:aipldm},\cite{ym:aicppcmv},\cite{mp:tdadmaicl}) follow the strategy to construct dual $t$-motives such that for the rigid analytic trivialization $\Psi$ of this module, the inverse of its specialization $\Psi(\theta)^{-1}$ has the desired values as entries. Then one shows algebraic independence for the corresponding entries of $\Psi$ or $\Psi^{-1}$ using different methods (like the Galois theoretical methods developed in  \cite{mp:tdadmaicl}) and deduces algebraic independence of the desired values.

\medskip

The main theorem in the present paper is about the periods of the $n$-th tensor power of the Carlitz module.
The $n$-th tensor power $E=C^{\otimes n}$ of the Carlitz module is a uniformizable $t$-module of dimension $n$ and rank $1$. Hence, the period lattice for $E$ is an $\FF_q[\theta]$-submodule of 
$\Lie(E)(\CC_\infty)\isom \CC_\infty^n$ of rank $1$, and we will show the following.

\begin{thm}  (see Thm.~\ref{thm:algebraic-independence})\\
Let $n\in\NN$ be prime to $q$, let $C^{\otimes n}$ be the $n$-th tensor power of the Carlitz module and let
\[ \begin{pmatrix} z_1 \\ \vdots \\ z_n \end{pmatrix} \in \CC_\infty^n\]
be a generator for the period lattice. Then $z_1,z_2,\ldots, z_n$ are algebraically independent over $\bar{K}$.
\end{thm}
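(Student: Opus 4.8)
The plan is to reduce the algebraic independence of the period coordinates $z_1,\dots,z_n$ to the hypothesis of Theorem~\ref{thm:conseq-of-abp}, by constructing an explicit dual $t$-motive whose rigid analytic trivialization $\Psi$ has the property that the entries of $\Psi(\theta)^{-1}$ recover the $z_i$ (up to multiplication by a known nonzero algebraic-over-$\bar K(t)$ factor). Concretely, I would start from the dual Carlitz motive $\mathfrak{C}$ with rigid analytic trivialization $\Omega$ satisfying $\sigma(\Omega)=\Omega\cdot(t-\theta)$, and consider its $n$-th tensor power, for which the trivialization is $\Omega^n$ and $\Phi_0=(t-\theta)^n$. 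The new ingredient of the paper should be the \emph{prolongation} construction advertised in the abstract: applied to $\mathfrak{C}^{\otimes n}$ it produces an $n\times n$ dual $t$-motive whose matrix $\Phi$ is upper-triangular with all diagonal entries equal to $(t-\theta)^n$, the off-diagonal entries being built from hyperderivatives $\partial_t^{(k)}$ of $(t-\theta)^n$, so that $\det(\Phi)=c\,(t-\theta)^{sn}$ with $s$ an integer $\geq 1$. The rigid analytic trivialization $\Psi$ of this prolongation should likewise be upper-triangular with entries the hyperderivatives $\partial_t^{(k)}(\Omega^n)$, and crucially $\Psi\in\GL_n(\TT)\cap\Mat_{n\times n}(\mathbb{E})$ because $\Omega^n$ is entire and hyperdifferentiation preserves entireness.

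Next I would verify that the specialization $\Psi(\theta)$ (or its inverse) encodes the period coordinates. Since $C^{\otimes n}$ has rank $1$ and dimension $n$, its period lattice generator lives in $\CC_\infty^n$, and Anderson--Thakur's description realizes the coordinates $z_i$ as values at $t=\theta$ of $\Omega^n$ together with its hyperderivatives $\partial_t^{(k)}(\Omega^n)$ for $k=0,\dots,n-1$. The point of the prolongation is precisely that these hyperderivatives appear naturally as the entries of $\Psi$, so that $\Psi(\theta)$ assembles all the $z_i$ at once, and the $\bar K$-algebra generated by the entries of $\Psi(\theta)$ coincides (after discarding the invertible triangular structure) with $\bar K(z_1,\dots,z_n)$. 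I would make this matching explicit by comparing the recursive relations defining the tensor power coordinates with the Leibniz-type recursion satisfied by the hyperderivatives of $\Omega^n$.

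With the dictionary in place, Theorem~\ref{thm:conseq-of-abp} reduces everything to showing that the transcendence degree of $\bar K(t)(\Psi)$ over $\bar K(t)$ equals $n$, i.e.\ that $\Omega^n,\partial_t^{(1)}(\Omega^n),\dots,\partial_t^{(n-1)}(\Omega^n)$ are algebraically independent over $\bar K(t)$. This is exactly where the hypertranscendence theorem for $\omega(t)$ mentioned in the abstract enters: since $\omega$ (equivalently $\Omega$, up to the algebraic factor $\lambda_\theta$) and all its hyperderivatives are algebraically independent over $\bar K(t)$, and since $n$ being prime to $q$ guarantees that $\Omega^n$ is not a $p$-th power and that $\partial_t^{(1)}(\Omega^n)=n\,\Omega^{n-1}\partial_t^{(1)}(\Omega)$ does not vanish, the first $n$ hyperderivatives of $\Omega^n$ generate a transcendence degree $n$ extension. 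Here the coprimality hypothesis is essential: if $q\mid n$ the leading hyperderivatives would degenerate and the transcendence degree would drop.

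The main obstacle I anticipate is twofold. First, proving the hypertranscendence of $\omega$ (equivalently that $\Omega$ and its hyperderivatives are algebraically independent over $\bar K(t)$) is itself a nontrivial statement requiring a careful argument, presumably via a differential-algebraic or functional-equation analysis exploiting the difference equation $\sigma(\Omega)=\Omega(t-\theta)$ together with the hyperderivation relations; this is the genuinely hard analytic core. Second, I expect the careful bookkeeping to verify that the prolongation really yields a valid dual $t$-motive with the correct $\Phi$ of determinant $c(t-\theta)^{sn}$ and with $\Psi$ entire and in $\GL_n(\TT)$ to be technically delicate, since one must check compatibility of the hyperderivation with the $\sigma$-structure and confirm that the coprimality $\gcd(n,q)=1$ is exactly what keeps the relevant entries nonzero so that the linear-algebra dictionary between $\Psi(\theta)$ and the period coordinates is an isomorphism rather than degenerating.
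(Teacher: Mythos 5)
Your plan follows the paper's route essentially step for step: the $(n-1)$-th prolongation of $\mathfrak{C}^{\otimes n}$ with rigid analytic trivialization $\Psi=\rho_{[n-1]}(\Omega^n)\in\GL_n(\TT)\cap\Mat_{n\times n}(\mathbb{E})$, the transfer of transcendence degrees via Theorem~\ref{thm:conseq-of-abp}, the hypertranscendence of $\omega$ as input, and the Anderson--Thakur identification of the coordinates $z_i$. But there is one genuine gap, exactly at the step you need the coprimality hypothesis: you infer that $\Omega^n,\hd{1}{\Omega^n},\ldots,\hd{n-1}{\Omega^n}$ have transcendence degree $n$ over $\bar K(t)$ from the facts that $\Omega^n$ is not a $p$-th power and that $\hd{1}{\Omega^n}=n\,\Omega^{n-1}\hd{1}{\Omega}\neq 0$. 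That inference is not valid. Hypertranscendence gives transcendence degree $n$ for the field $E$ generated by $\Omega,\hd{1}{\Omega},\ldots,\hd{n-1}{\Omega}$, and by the Leibniz rule the field $F$ generated by the $\hd{j}{\Omega^n}$ is a subfield of $E$; a subfield can have strictly smaller transcendence degree, and the nonvanishing of the single entry $\hd{1}{\Omega^n}$ controls nothing about algebraic relations among the higher $\hd{j}{\Omega^n}$, whose Leibniz expansions are complicated polynomials in the $\hd{i}{\Omega}$. (That a real mechanism is needed is shown by the case $p\mid n$ --- note the relevant condition is $p\nmid n$, which for $q=p^e$ is the same as $\gcd(n,q)=1$, not ``$q\nmid n$'' --- where many $\hd{j}{\Omega^n}$ vanish identically and the degree genuinely drops.) The paper closes this hole with Lemma~\ref{lem:finite-extension}: choose $s$ with $p^s>n-1$, so that $\hd{j}{f^{p^s}}=0$ for $1\le j\le n-1$ and $\rho_{[n-1]}(f^{p^s})$ is the scalar matrix $f^{p^s}\one$; writing $ap^s+bn=1$ (Bézout, using $p\nmid n$) and using that $\rho_{[n-1]}$ is a ring homomorphism yields $\rho_{[n-1]}(f)=(f^{p^s})^a\cdot\rho_{[n-1]}(f^n)^b$, hence $E=F(f)$ with $f$ algebraic over $F$ (since $f^n\in F$), so $E/F$ is finite and the transcendence degrees agree. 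This Frobenius-scalar-plus-Bézout trick is the missing idea; your ``not a $p$-th power'' remark gestures toward it but is not a substitute for it.

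Two smaller points. First, the coordinates sit in $\Psi(\theta)^{-1}$, not in $\Psi(\theta)$: by Lemma~\ref{lem:description-of-coordinates}, $z_i=(-1)^n\hd{n-i}{(t-\theta)^n\omega^n}|_{t=\theta}$, i.e.\ the $z_i$ are the entries of $\rho_{[n-1]}(\Omega^{-n})|_{t=\theta}$; since a matrix and its inverse generate the same field over $\bar K$, your hedge ``(or its inverse)'' is the correct branch, but the dictionary should be stated for the inverse. Second, the paper's proof of hypertranscendence is not the differential-algebraic or functional-equation analysis you anticipate: it is a specialization argument at roots of unity $\zeta\in\bar\FF_q^{\,\times}$, where $\hd{n}{\omega}(\zeta)$ generates the Carlitz torsion extension $K(\zeta)(C[\pfrak^{n+1}])$ with minimal polynomial of degree $q^{\deg\pfrak}$ (resp.\ $q^{\deg\pfrak}-1$ for $n=0$); letting $\deg\pfrak$ grow contradicts any fixed algebraic relation. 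Since you invoke hypertranscendence as an external ingredient --- exactly as the paper's Section~\ref{sec:algebraic-independence} does --- this mismatch of anticipated method does not affect your plan for the statement at hand, but the Lemma~\ref{lem:finite-extension} gap above does and must be filled.
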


The first step will be the definition of an appropriate dual $t$-motive such that the specialization at $t=\theta$ of the inverse of the rigid analytic trivialization contains such coordinates $z_1,\ldots, z_n$.
As this is a special case of a general construction of new $t$-motives from old ones, we present this construction in detail. Actually, the main part of the paper is devoted to this construction which we call \textit{prolongation}, due to its similarities to prolongations in differential geometry.

In Section  \ref{sec:prolongations-of-t-motives}, we start by defining the prolongations of (non-dual) $t$-motives, since they are often defined over a smaller base field than the dual $t$-motives, and we show various properties which transfer from the original $t$-motive to its prolongation.
We also give the explicit descriptions with matrices for abelian $t$-motives.
In Section \ref{sec:prolongations-of-dual-t-motives}, we transfer the definition of prolongation and the explicit description to dual $t$-motives, and in Section \ref{sec:prolongations-of-t-modules}, we transfer it to $t$-modules, too.

\medskip 

For the definition of prolongations, we make use of hyperdifferential operators (also called iterative higher derivations) with respect to the variable $t$. These are the family of $\CC_\infty$-linear maps $(\hde{n})_{n\geq 0}$ given by
\[    \hd{n}{\sum_{i=i_0}^\infty x_it^i} =\sum_{i=i_0}^\infty \binom{i}{n} x_it^{i-n} \]
for Laurent series $\sum_{i=i_0}^\infty x_it^i\in \laurent$, 
where $\binom{i}{n}\in \FF_p\subset \FF_q$ is the residue of the usual binomial coefficient.
One should think of the $n$-th hyperdifferential operator $\hde{n}$ as $\frac{1}{n!}(d/dt)^n$, although in characteristic $p$, we can't divide by $n!$, if $n\geq p$. In characteristic zero, however, $\hde{n}$ would be
exactly  $\frac{1}{n!}(d/dt)^n$.

As a warning to the reader, we would like to note that in the literature usually the hyperdifferential operators with respect to $\theta\in K$ are used (e.g.~in \cite{db:lidddm-I}, \cite{db:meagli}, \cite{db-ld:lidddm-II}), and hence the operation on power series is by hyperdifferentiating the coefficients. In this article, we will not use those hyperdifferential operators, but exclusively the hyperdifferentiation by $t$.

\medskip

In the proof of the main theorem, the Anderson-Thakur function $\omega(t)$ and its hyperderivatives appear, as $\omega$ is related to $\Omega$ via
\[   \omega=\frac{1}{(t-\theta)\Omega}. \]
In Section \ref{sec:omega-hypertranscendental}, we show a  property of $\omega$ which is of interest on its own, namely we show
\begin{thm} (see Thm.~\ref{thm:hypertranscendence})
The Anderson-Thakur function $\omega(t)$ is hypertranscendental over $\bar{K}(t)$, i.e.~the set $\{\hd{n}{\omega} \mid n\geq 0\}$ is algebraically independent over $\bar{K}(t)$.
\end{thm}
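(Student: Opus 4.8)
The plan is to reduce the statement to the single function $\Omega$, to package the hyperderivatives of $\Omega$ into the rigid analytic trivialization of a prolongation of the dual Carlitz motive, and then to invoke Theorem \ref{thm:conseq-of-abp}. First I would observe that, since $\omega=\frac{1}{(t-\theta)\Omega}$ with $(t-\theta)\in\bar{K}[t]$, the Leibniz rule $\hd{n}{fg}=\sum_{i+j=n}\hd{i}{f}\,\hd{j}{g}$ together with the composition law $\hde{k}\circ\hde{m}=\binom{k+m}{m}\hde{k+m}$ show that for every $N$ the fields $\bar{K}(t)(\omega,\hd{1}{\omega},\dots,\hd{N}{\omega})$ and $\bar{K}(t)(\Omega,\hd{1}{\Omega},\dots,\hd{N}{\Omega})$ coincide. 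Hence it suffices to prove that $\Omega_0,\Omega_1,\dots,\Omega_N$ are algebraically independent over $\bar{K}(t)$ for every $N$, where $\Omega_n:=\hd{n}{\Omega}$.

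Next, applying $\hde{n}$ to the defining equation $\sigma(\Omega)=(t-\theta)\Omega$ and using that each $\hde{n}$ commutes with $\sigma$ (the binomial coefficients lie in $\FF_p$) yields the recursion
\[ \sigma(\Omega_0)=(t-\theta)\Omega_0,\qquad \sigma(\Omega_n)=(t-\theta)\Omega_n+\Omega_{n-1}\quad(n\ge 1). \]
I would encode this into the $(N+1)\times(N+1)$ matrices
\[ \Psi_N=\begin{pmatrix}\Omega_0 & \Omega_1 & \cdots & \Omega_N\\ & \Omega_0 & \cdots & \Omega_{N-1}\\ & & \ddots & \vdots\\ & & & \Omega_0\end{pmatrix},\qquad \Phi_N=\begin{pmatrix}t-\theta & 1 & & \\ & t-\theta & \ddots & \\ & & \ddots & 1\\ & & & t-\theta\end{pmatrix}, \]
for which one checks directly that $\sigma(\Psi_N)=\Psi_N\Phi_N$; indeed $\Psi_N$ is precisely the rigid analytic trivialization of the $N$-th prolongation of the dual Carlitz motive $\mathfrak{C}$. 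Since $\Omega$ is entire and has no zeros on $|t|_\infty\le 1$, all entries of $\Psi_N$ are entire and $\det\Psi_N=\Omega^{N+1}\in\TT^\times$, so $\Psi_N\in\GL_{N+1}(\TT)\cap\Mat_{(N+1)\times(N+1)}(\mathbb{E})$; moreover $\det\Phi_N=(t-\theta)^{N+1}$. Theorem \ref{thm:conseq-of-abp} therefore applies and gives
\[ \operatorname{tr.deg}_{\bar{K}(t)}\bar{K}(t)(\Omega_0,\dots,\Omega_N)=\operatorname{tr.deg}_{\bar{K}}\bar{K}(\Omega_0(\theta),\dots,\Omega_N(\theta)). \]

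It remains to prove that this common transcendence degree equals $N+1$, which I would establish by induction on $N$ on the function side, the base case $N=0$ being the transcendence of $\Omega$ over $\bar{K}(t)$. Assuming $\Omega_0,\dots,\Omega_{N-1}$ are independent, set $L=\bar{K}(t)(\Omega_0,\dots,\Omega_{N-1})$; the recursion shows $\sigma(L)=L$, so $L$ is a $\sigma$-field with field of $\sigma$-invariants $\FF_q(t)$. If $\Omega_N$ were algebraic over $L$, I would apply $\sigma$ to its minimal polynomial and compare it, via the affine substitution $X\mapsto (t-\theta)X+\Omega_{N-1}$ coming from the recursion, with the minimal polynomial of $\sigma(\Omega_N)$; matching the subleading coefficients produces $b\in L$ with $\sigma(b)=(t-\theta)b+\Omega_{N-1}$, whence $(\Omega_N-b)/\Omega_0$ is $\sigma$-invariant and therefore lies in $\FF_q(t)$. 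This forces $\Omega_N\in L$, and a final functional-equation argument — inserting $\Omega_N=R(\Omega_0,\dots,\Omega_{N-1})$ into the recursion and using the algebraic independence of $\Omega_0,\dots,\Omega_{N-1}$ to obtain an intertwining identity for the rational function $R$ under $X_i\mapsto (t-\theta)X_i+X_{i-1}$ — shows that no such $R$ exists. Letting $N\to\infty$ completes the proof.

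The hard part is this last step, establishing that the transcendence degree is maximal, i.e.\ excluding all algebraic relations among the $\Omega_n$; equivalently, it is the assertion that the difference-Galois group of the prolongation of $\mathfrak{C}$ is the full $(N+1)$-dimensional prolongation of $\GG_m$. Within the elementary descent above the two delicate points are the characteristic-$p$ case in which the degree $d$ of the minimal polynomial is divisible by $p$ (so that one cannot divide the subleading coefficient by $d$, and a more careful comparison of several coefficients, or passage to a $p$-power twist, is required), and the concluding impossibility of an intertwining rational function $R$, which must be extracted from the weight and degree structure of the substitution.
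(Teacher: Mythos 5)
Your reduction from $\omega$ to $\Omega$, the packaging of $\hd{n}{\Omega}$ into $\rho_{[N]}(\Omega)$ with $\sigma(\Psi_N)=\Psi_N\Phi_N$, and the verification of the hypotheses of Theorem~\ref{thm:conseq-of-abp} are all correct and consistent with the paper's Sections~\ref{sec:prolongations-of-dual-t-motives} and~\ref{sec:carlitz-case}. But the invocation of Theorem~\ref{thm:conseq-of-abp} does no work here: it converts the problem into computing $\operatorname{tr.deg}_{\bar{K}}\bar{K}(\Omega_0(\theta),\dots,\Omega_N(\theta))$, yet you never compute that side, and in this paper the independence of those values (the period coordinates of $C^{\otimes n}$, via Lemma~\ref{lem:description-of-coordinates}) is \emph{deduced from} hypertranscendence of $\omega$, so going through the values would be circular. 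Everything therefore rests on your ``function side'' induction, and that argument has genuine gaps at exactly its decisive points, which you yourself flag but do not close: (a) the coefficient-matching producing $b\in L$ with $\sigma(b)=(t-\theta)b+\Omega_{N-1}$ divides by the degree $d$ of the minimal polynomial and fails whenever $p\mid d$ --- in this setting there is no a priori control on $d$, and ``passage to a $p$-power twist'' is not spelled out and is not routine; (b) the concluding claim that no rational function $R$ satisfies the intertwining identity under $X_i\mapsto (t-\theta)X_i+X_{i-1}$ is asserted from ``weight and degree structure'' but never proved, and it is precisely the statement that the relevant difference-Galois group is full, i.e.\ the theorem itself in disguise; (c) the claim that the $\sigma$-invariants of $L$ (a subfield of $\operatorname{Frac}(\TT)$, not of $\TT$) are $\FF_q(t)$ needs a justification you do not supply. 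As written, the proposal is a plausible program, not a proof.

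For comparison, the paper's proof is entirely different in character: it is arithmetic rather than difference-Galois-theoretic. It quotes the results of Angl\'es--Pellarin and Maurischat--Perkins that for a root of unity $\zeta$ with minimal polynomial $\pfrak$ of degree $d$, the value $\hd{n}{\omega}(\zeta)$ generates the Carlitz $\pfrak^{n+1}$-torsion extension of $K(\zeta)$, with explicit minimal polynomial $X^{q^d}-\beta(\zeta)X-\xi_n(\zeta)$ over $K(\zeta)(C[\pfrak^{n}])$ (degree $q^d-1$ when $n=0$). Given a putative relation, one takes $n$ minimal, writes it as a polynomial of degree $k$ in $X_n=\hd{n}{\omega}$ with leading coefficient $f_k(\omega,\dots,\hd{n-1}{\omega})\neq 0$ in $\TT$; since a nonzero element of $\TT$ has only finitely many zeros in the closed unit disc, for almost all $\zeta$ the specialized relation exhibits $\hd{n}{\omega}(\zeta)$ as a root of a nonzero polynomial of degree $k$ over $K(C[\pfrak^n])(\zeta)$, contradicting the minimal polynomial's degree once $q^d-1>k$. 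If you want to salvage your approach, you would need to actually prove the nonexistence of the intertwining $R$ and handle the $p\mid d$ case, or else import the Galois-theoretic machinery of Papanikolas wholesale; the root-of-unity specialization route avoids both difficulties.
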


This will be deduced from properties of specializations of $\omega$ and its hyperderivatives at roots of unity which were investigated in \cite{ba-fp:ugtsls} and \cite{am-rp:iddbcppte}. This statement has also been given in \cite{fp:vscce} whose proof uses different methods.

\medskip

\subsection*{Acknowledgement}
I would like to thank R.~Perkins who turned my attention to the work of Angl\'es-Pellarin \cite{ba-fp:ugtsls}, in which our common paper \cite{am-rp:iddbcppte} resulted, and
which marked the beginning of the investigations presented in this article. 
I would also like to thank F.~Pellarin for interesting discussions on the hypertranscendence of $\omega$.

\section{Generalities}

\subsection{Base rings and operators}

Let $\FF_q$ be the finite field with $q$ elements, and $K$ a finite extension of the rational function field $\FF_q(\theta)$ in the variable $\theta$. We choose an extension to $K$ of the absolute value $|\cdot |_\infty$ which is given on $\FF_q(\theta)$  by $|\theta|_\infty=q$.
 Furthermore, $K_\infty\supseteq \FF_q\ls{\frac{1}{\theta}}$ denotes the completion of $K$ at this infinite place, and
$\CC_\infty$ the completion of an algebraic closure of $K_\infty$. Furthermore, let $\bar{K}$ be the algebraic closure of $K$ inside $\CC_\infty$.

All the commutative rings occuring will be subrings of the field of Laurent series $\laurent$, like
the polynomial rings $K[t]$ and $\bar{K}[t]$, the power series ring $\powser$ and the
Tate algebra $\TT=\tate$, i.e.~the algebra of series which are convergent for $|t|_\infty\leq 1$. 

On $\laurent$ we have several operations which will induce operations on these subrings.

First of all, there is the twisting $\tau:\laurent\to \laurent$ given by 
\[  f^{\tau} 
:=\sum_{i=i_0}^\infty (x_i)^qt^i \]
for $f=\sum_{i=i_0}^\infty x_it^i\in \laurent$, and the inverse twisting $\sigma:\laurent\to \laurent$ given by 
\[  f^\sigma := \sum_{i=i_0}^\infty (x_i)^{1/q}t^i \]
for $f=\sum_{i=i_0}^\infty x_it^i\in \laurent$. While the twisting restricts to endomorphisms on all subrings of $\laurent$ which occur in this paper, the inverse twisting is only defined for perfect coefficient fields, in particular not on $K[t]$, but on $\bar{K}[t]$.

On the Laurent series ring $\laurent$ we furthermore have an action of the hyperdifferential operators with respect to $t$, i.e.~the sequence of $\CC_\infty$-linear maps $(\hde{n})_{n\geq 0}$ given by
\[    \hd{n}{\sum_{i=i_0}^\infty x_it^i} =\sum_{i=i_0}^\infty \binom{i}{n} x_it^{i-n}. \]
The image $\hd{n}{f}$ of some $f\in \laurent$ is called the $n$-th hyperderivative of $f$.

The hyperdifferential operators satisfy $\hd{0}{f}=f$ for all $f\in \laurent$,
\[ \hd{n}{fg}=\sum_{i=0}^n \hd{i}{f}\hd{n-i}{g} \quad \text{for all }f,g\in \laurent, n\in \NN \]
as well as
\[  \hd{n}{\hd{m}{f}}=\binom{n+m}{n}\hd{n+m}{f}\quad \text{for all }f\in \laurent, n,m\in \NN. \]

It is not hard to verify that the subrings $\powser$, $\TT$, $L[t]$, and $L(t)$ (for any subfield
$L$ of $\CC_\infty$) are stable under all the hyperdifferential operators. It is also obvious that the hyperdifferential operators commute with the twistings $\tau$ and~$\sigma$.

Another way to obtain these hyperdifferential operators is to consider the $\CC_\infty$-algebra
map $\mathcal{D} : \powser \rightarrow \powser[\![X]\!]$
\[f(t) \mapsto f(t+X) = \sum_{n \geq 0} f_n(t) X^n \]
given by replacing the variable $t$ in the power series expansion for $f$ by $t+X$, expanding each $(t+X)^n$ using the binomial theorem, and rearranging to obtain a power series in $X$. Then, one has
 \[\hd{n}{f}=f_n. \]
Since, $\hd{0}{f}=f$ for all $f\in  \powser$, the homomorphism $\mathcal{D}$ can be extended  to a $\CC_\infty$-algebra map $\mathcal{D} : \laurent \rightarrow \laurent[\![X]\!]$, and we still have the identity
\[  \mathcal{D}(f)= \sum_{n \geq 0} \hd{n}{f} X^n. \]
For more background on hyperdifferential operators (iterative higher derivations) see for example \cite[\S 27]{hm:crt}.
\footnote{As already mentioned in the introduction, these hyperdifferential operators are not the one commonly used for constructing $t$-modules.
}

\medskip

When we apply the twisting operators $\tau$ and $\sigma$ as well as the hyperdifferential operators to matrices it is meant that we apply them entry-wise.

\medskip

We will frequently use the  following family (in $n \geq 0$) of homomorphisms of $\CC_\infty$-algebras $\rho_{[n]} : \laurent \rightarrow \Mat_{(n+1) \times (n+1)}(\laurent)$ defined by
\begin{equation}\label{eq:rho_n}
 \rho_{[n]}(f) := \begin{pmatrix} f & \hd{1}{f} & \cdots & \hd{n}{f} \\ 0 & f & \ddots & \vdots \\ \vdots & \ddots & \ddots & \hd{1}{f} \\ 0 & \cdots & 0 & f \end{pmatrix},
\end{equation}
which already appears in \cite{am-rp:iddbcppte}. This map 
arises from the homomorphism $\mathcal{D}$ by evaluation of $X$ at the $(n+1) \times (n+1)$ nilpotent matrix 
\[ N= \begin{pmatrix}
 0 & 1 & 0 & \cdots & 0 \\ \vdots & \ddots & \ddots & \ddots & \vdots  \\ \vdots && \ddots & \ddots & 0 \\ \vdots && & \ddots & 1 \\
 0 &  \cdots &\cdots &\cdots & 0
\end{pmatrix}. \]

We will also apply $\rho_{[n]}$ to square matrices $\Theta\in \Mat_{r\times r}(\laurent)$. In that case, $\rho_{[n]}(\Theta)$ is defined to be the block square matrix
\begin{equation}\label{eq:rho_n-matrix} \rho_{[n]}(\Theta):= \begin{pmatrix}
\Theta & \hd{1}{\Theta}  & \hd{2}{\Theta} & \cdots& \hd{n}{\Theta} \\
0 & \Theta &  \hd{1}{\Theta} &  \ddots   & \vdots \\ 
\vdots &\ddots  & \ddots & \ddots &   \hd{2}{\Theta}\\
\vdots & & \ddots  &  \Theta &  \hd{1}{\Theta} \\
0 &  \cdots & \cdots & 0 &  \Theta
\end{pmatrix} 
\end{equation}
in the ring of $r(n+1)\times r(n+1)$-matrices. As mentioned before $\hd{1}{\Theta}$ etc. is
the matrix where we apply the hyperdifferential operators coefficient-wise.
It is not hard to check that $\rho_{[n]}:\Mat_{r\times r}(\laurent)\to \Mat_{r(n+1)\times r(n+1)}(\laurent)$
is a ring homomorphism, too.

As the hyperdifferential operators commute with twisting, $\rho_{[n]}$ also commutes with twisting.

\subsection{Convention on notation}\label{subsec:conventions}

In the following sections, we will deal with $t$-modules, $t$-motives and dual $t$-motives. We use the definitions of these terms as given 
 in the survey article \cite{db-mp:ridmtt}. For the convenience of the reader, we repeat these definitions below, but refer the reader to ibid. for more details.
For recognizing the objects at first glance, $t$-modules will be denoted by italic letters, like $E$, $t$-motives with serif-less letters, like $\mot$, and dual $t$-motives in Fraktur font, like~$\dumot$.

Bases of 
finitely generated free modules (over some ring) will always be written as row vectors $\vect{e}=(e_1,\ldots, e_r)$
such that one obtains the familiar identification of the module with a module of column vectors by
writing an arbitrary element $x=\sum_{i=1}^r x_ie_i$ as
\[   \vect{e}\cdot \begin{pmatrix}
x_1\\ \vdots \\ x_r \end{pmatrix}. \]

A {\markdef $t$-module} $(E,\Phi)$ (or shortly, $E$) consists of an algebraic group $E$ over $K$ which is isomorphic to $\GG_a^d$ for some $d>0$, and an $\FF_q$-algebra homomorphism
\[ \Phi:\FF_q[t]\to \End_{{\rm grp},\FF_q}(E)\isom \Mat_{d\times d}(K\{\tau\}), \]
with the additional property that $\Phi(t)-\theta\cdot \id_E$ induces a nilpotent endomorphism on $\Lie(E)$.
In other terms, if one writes
\[ \Phi(t)= A_0+A_1\tau+\ldots +A_s\tau^s\in  \Mat_{d\times d}(K\{\tau\}) \]
with respect to some isomorphism $\End_{{\rm grp},\FF_q}(E)\isom \Mat_{d\times d}(K\{\tau\})$, 
 then the matrix $A_0-\theta\cdot \one_d\in \Mat_{d\times d}(K)$ is nilpotent.

\medskip

A {\markdef $t$-motive} $\mot$ 
is a left $K[t]\{\tau\}$-module which is free and finitely generated as $K\{\tau\}$-module, and such that
\[  (t-\theta)^\ell(\mot)\subseteq K[t]\cdot \tau(\mot) \]
for some $\ell\in \NN$. A $t$-motive $\mot$ is called {\markdef abelian} if it is also finitely generated as $K[t]$-module in which case it is even free as $K[t]$-module. An abelian $t$-motive $\mot$ is called {\markdef pure}, if there exists a $K\ps{1/t}$-lattice $H$ inside $\mot\otimes_{K[t]} K\ls{1/t}$ and $u,v\geq 1$ such that
\[  t^uH= K\ps{1/t}\cdot \tau^v H. \]
The fraction $w=\frac{u}{v}$ is called the {\markdef weight} of $\mot$.

Given an abelian $t$-motive $\mot$ with $K[t]$-basis $\vect{e}=(e_1,\ldots, e_r)$, then there is a matrix $\Theta\in \Mat_{r\times r}(K[t])$ representing the $\tau$-action on $\mot$ with respect to $\{e_1,\ldots, e_r\}$, i.e.~
\[   \tau( e_j)= \sum_{h=1}^r \Theta_{hj}e_h \]
for all $j=1,\ldots, r$. This will be written in matrix notation as
\[  \tau (\vect{e})= \vect{e}\cdot \Theta.\]
For an arbitrary element $x=\sum_{i=1}^r x_ie_i$ one therefore has
\[ \tau(x)=\vect{e}\cdot \Theta \cdot \begin{pmatrix}
x_1\\ \vdots \\ x_r \end{pmatrix}^\tau. \]

\medskip

Writing the basis as a row vector instead of a column vector, as for example in \cite{mp:tdadmaicl}, causes the difference equations for the rigid analytic trivializations to have a different form which we will review now. However, the usual form is obtained by taking transposes of the matrices given here:

Given an abelian $t$-motive $\mot$ with $K[t]$-basis $\vect{e}=(e_1,\ldots, e_r)$ and
$\Theta\in \Mat_{r\times r}(K[t])$ such that
\[  \tau (\vect{e})= \vect{e}\cdot \Theta,\]
a {\markdef rigid analytic trivialization} (if it exists) is a matrix $\Upsilon\in \GL_{r}(\TT)$
such that $\tau(\vect{e}\cdot \Upsilon)=\vect{e}\cdot \Upsilon$, i.e.~such that
\[  \Theta\cdot \Upsilon^\tau=\Upsilon. \]
If $\Upsilon$ exists, $\mot$ is called {\markdef rigid analytically trivial}.

In \cite{ga:tm}, Anderson associated to a $t$-module $E$ a $t$-motive 
$\mathsf{E}:=\Hom_{{\rm grp},\FF_q}(E,\GG_a)$ with $t$-action given by composition with 
$\Phi_t\in \End_{{\rm grp},\FF_q}(E)$ and left-$K\{\tau\}$-action given by composition
with elements in $K\{\tau\}\isom \End_{{\rm grp},\FF_q}(\GG_a)$. A $t$-module is then called {\markdef abelian} if the associated $t$-motive is abelian,
 and Anderson proved (cf.~\cite[Thm.~1]{ga:tm}) that this correspondence
induces an anti-equivalence of categories between abelian $t$-modules and abelian $t$-motives. However, the proof even shows that it induces an anti-equivalence of categories between $t$-modules and $t$-motives.

\medskip

A {\markdef dual $t$-motive} $\dumot$ is a left $\bar{K}[t]\{\sigma\}$-module that is free and finitely generated as $\bar{K}\{\sigma\}$-module,  and such that
\[  (t-\theta)^\ell(\dumot)\subseteq \sigma(\dumot) \]
for some $\ell\in \NN$.  A dual $t$-motive is called {\markdef $t$-finite} if it is also finitely generated as $\bar{K}[t]$-module in which case it is even free as $\bar{K}[t]$-module.

For a $t$-finite dual $t$-motive $\dumot$ with $K[t]$-basis $\vect{e}=(e_1,\ldots, e_r)$ and
$\tilde{\Theta}\in \Mat_{r\times r}(K[t])$ such that
\[  \sigma (\vect{e})= \vect{e}\cdot \tilde{\Theta}\]
a {\markdef rigid analytic trivialization} (if it exists) is a matrix $\Psi\in \GL_{r}(\TT)$ such that $\sigma(\vect{e}\cdot \Psi^{-1})=\vect{e}\cdot \Psi^{-1}$, i.e.~such that
\[  \Psi\cdot \tilde{\Theta}=\Psi^\sigma. \]
If $\Psi$ exists, $\dumot$ is called {\markdef rigid analytically trivial}.

Similar, as for $t$-motives, Anderson associated to a $t$-module $E$ over $\bar{K}$ a dual $t$-motive
$\mathfrak{E}:=\Hom_{{\rm grp},\FF_q}(\GG_a,E)$ with $t$-action given by composition with 
$\Phi_t\in \End_{{\rm grp},\FF_q}(E)$ and left-$K\{\sigma\}$-action given by composition
with elements in $K\{\sigma\}\isom K\{\tau\}^{\rm op}\isom \End_{{\rm grp},\FF_q}(\GG_a)^{\rm op}$.
Anderson showed (cf.~\cite{uh-akj:pthshcff}) that this induces an equivalence of categories between $t$-modules over $\bar{K}$ and $t$-motives.

\section{Prolongations of $t$-motives}\label{sec:prolongations-of-t-motives}

In this section, we introduce a construction of new $t$-motives from old ones which we call
\textit{prolongation}. The construction is taken from \cite{mk:tffo} where prolongations of difference modules are described. We also show (see Theorems \ref{thm:prolongation-motive-abelian} and \ref{thm:pure-and-r-a-t}) that the prolongations inherit the properties of abelianness, rigid analytic triviality as well as pureness from the original $t$-motive.

\begin{defn}\label{def:prolongation}
For a $K[t]\{\tau\}$-module $\mot$ and $k\geq 0$, the {\markdef $k$-th prolongation} of 
$\mot$ is the $K[t]$-module $\rho_k\mot$ which is generated by symbols
$D_im$, for $i=0,\ldots, k$, $m\in \mot$, subject to the relations
\begin{enumerate}
\item $D_i(m_1+m_2)=D_im_1+D_im_2$, 
\item\label{item:second-relation} $D_i(a\cdot m)=\sum_{i_1+i_2=i} \hd{i_1}{a}\cdot D_{i_2}m$,
\end{enumerate}
 for all $m,m_1,m_2\in \mot$, $a\in K[t]$ and $i=0,\ldots, k$.
 The semi-linear $\tau$-action on $\rho_k\mot$ is given by
 \[   \tau( a\cdot D_im)=a^{\tau}\cdot D_i(\tau(m)). \]
 for  $a\in K[t]$, $m\in \mot$.
\end{defn}

One should think of $D_im$ as being the formal $i$-th hyperderivative of the element~$m$.

\begin{rem}\label{rem:prolongation-is-extension}
It is not difficult to verify that the definition of the $\tau$-action is well-defined.
Hence, the $k$-th prolongation $\rho_k\mot$ is again a $K[t]\{\tau\}$-module.

Furthermore, $\rho_0\mot$ is naturally isomorphic to $\mot$ (via $D_0m\mapsto m$), and
for $0\leq l<k$ the $l$-th prolongation  $\rho_l\mot$ naturally is a  $K[t]\{\tau\}$-submodule of $\rho_k\mot$. For  $0\leq l<k$, we even obtain a short exact sequence of $K[t]\{\tau\}$-modules
\[  0\longrightarrow \rho_l\mot \longrightarrow \rho_k\mot \xrightarrow{\pr} \rho_{k-l-1}\mot \to 0 \]
where $\pr(D_im):= D_{i-l-1}m$ for $i>l$ and all $m\in \mot$, as well as
$\pr(D_im):=0$ for $i\leq l$ and all $m\in \mot$.
In particular, taking $l=k-1$ and using the identification $\rho_0\mot\isom\mot$, we obtain the short exact sequence
\begin{equation}\label{eq:short-exact-sequence}
 0\longrightarrow \rho_{k-1}\mot \longrightarrow \rho_k\mot \longrightarrow \mot \to 0 . \tag{*}
 \end{equation} 
Inductively, we see that $\rho_k\mot$ is a $(k+1)$-fold extension of $\mot$ with itself.

From this description as a $(k+1)$-fold extension of $\mot$ with itself, we will be able to transfer several additional properties of $\mot$ to the prolongation $\rho_k\mot$ (see Theorem \ref{thm:prolongation-motive-abelian} and Theorem \ref{thm:pure-and-r-a-t}).
\end{rem}

\begin{lem}\label{lem:prolongation-as-K-v-s}
As a $K$-vector space, the $k$-th prolongation $\rho_k\mot$ is generated by the symbols
$D_im$, for $i=0,\ldots, k$, $m\in \mot$, subject to the relations
\[  D_i(x_1m_1+x_2m_2)= x_1\cdot D_im_1+x_2\cdot D_im_2 \]
for all $m_1,m_2\in \mot$, $x_1,x_2\in K$ and $i=0,\ldots, k$.
The actions of $t$ and $\tau$ are described by
\begin{eqnarray*}
t\cdot D_im &=& D_i(tm)-D_{i-1}m \\
\tau( D_im ) &=& D_i(\tau(m))
\end{eqnarray*}
for $m\in \mot$, $i=0,\ldots, k$ where we set $D_{-1}m:=0$.
\end{lem}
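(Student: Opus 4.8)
The statement re-describes $\rho_k\mot$ purely as a $K$-vector space (forgetting the $K[t]$-module structure of Definition \ref{def:prolongation}) together with explicit formulas for the actions of $t$ and $\tau$. The plan is to verify that the relations in the lemma are precisely the $K$-linear consequences of relations (1) and (2), and then to read off the $t$- and $\tau$-actions from those same relations. First I would note that relation (2) of Definition \ref{def:prolongation}, specialized to $a\in K\subset K[t]$ a scalar, gives $\hd{i_1}{a}=0$ for $i_1>0$ (since constants are killed by all positive hyperderivatives) and $\hd{0}{a}=a$, so relation (2) collapses to $D_i(a\cdot m)=a\cdot D_im$. Combined with additivity (relation (1)), this yields exactly the stated $K$-bilinearity $D_i(x_1m_1+x_2m_2)=x_1 D_im_1+x_2 D_im_2$.

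\emph{Generation and relations.} Next I would argue that these $K$-linear relations already generate all relations among the symbols $D_im$ as a $K$-vector space. The point is that over $K[t]$ the module $\rho_k\mot$ is generated by the $D_im$ subject only to (1) and (2); relation (2) for a general $a=\sum_j a_j t^j\in K[t]$ is, by $K$-linearity and the product rule for hyperderivatives, generated by the single case $a=t$, namely $D_i(t\cdot m)=t\cdot D_im + D_{i-1}m$ (using $\hd{0}{t}=t$, $\hd{1}{t}=1$, and $\hd{j}{t}=0$ for $j\geq 2$). This is exactly the first displayed action formula, rewritten as $t\cdot D_im = D_i(tm)-D_{i-1}m$, with the convention $D_{-1}m:=0$ handling the $i=0$ case. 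Thus as a $K$-vector space the module is spanned by the $D_im$ with only the $K$-bilinear relations imposed, and multiplication by $t$ is recovered from the $K[t]$-structure via this formula. I would make precise the claim that no further $K$-linear relations appear by invoking the universal property: $\rho_k\mot$ is \emph{defined} as the quotient of the free $K[t]$-module on the symbols $D_im$ by the submodule generated by relations (1) and (2), so any $K$-linear relation holding in $\rho_k\mot$ must be a $K[t]$-consequence of (1) and (2), and the above shows these consequences are generated by $K$-bilinearity together with the displayed $t$-action.

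\emph{The $\tau$-action.} The formula $\tau(D_im)=D_i(\tau(m))$ is immediate: by Definition \ref{def:prolongation} we have $\tau(a\cdot D_im)=a^\tau\cdot D_i(\tau(m))$, and setting $a=1$ gives $\tau(D_im)=D_i(\tau(m))$ directly. (For completeness one records that the general semilinear formula is then recovered, since $a^\tau$ for $a\in K$ is the $q$-th power, consistent with $\tau$ being $\FF_q$-linear.)

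\emph{Main obstacle.} The only genuinely delicate point is the well-definedness/no-extra-relations claim: one must be sure that passing from the $K[t]$-presentation to the $K$-presentation does not silently collapse or create relations. The cleanest way I would handle this is to exhibit a concrete $K$-basis. If $\vect{e}=(e_1,\ldots,e_r)$ is a $K\{\tau\}$-basis of $\mot$ — or, in the abelian case, if one fixes a $K$-basis of $\mot$ — then I claim $\{D_ie_j \mid 0\leq i\leq k,\ j\}$ is a $K$-basis of $\rho_k\mot$; linear independence follows from the short exact sequences \eqref{eq:short-exact-sequence} of Remark \ref{rem:prolongation-is-extension}, which identify the associated graded of the filtration by the $\rho_l\mot$ with copies of $\mot$, so that the $D_ie_j$ for fixed $i$ map to a basis of the $i$-th graded piece. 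This pins down the dimension and confirms that the $K$-linear relations listed are the only ones, completing the proof.
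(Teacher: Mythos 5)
Your main line of argument is exactly the paper's proof: specialize relation (2) of Definition \ref{def:prolongation} to $a=t$ to obtain $D_i(tm)=t\cdot D_im+D_{i-1}m$, which both yields the displayed $t$-action and shows that $K[t]$-multiples of the symbols collapse into the $K$-span of the $D_im$; specialize (2) to $a\in K$ to see that the defining relations reduce to $K$-bilinearity; and set $a=1$ in the semilinear formula to get $\tau(D_im)=D_i(\tau(m))$. Up to this point the proposal is correct and essentially identical to what the paper does (the paper is content with this and does not belabor the ``no extra relations'' point, since the passage between the two presentations is reversible: the $K$-presentation with the displayed $t$-action recovers the $K[t]$-presentation).

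However, your final paragraph, offered as the way to nail down the delicate point, contains a genuine error: if $\vect{e}=(e_1,\ldots,e_r)$ is a $K\{\tau\}$-basis of $\mot$, then $\{D_ie_j \mid 0\leq i\leq k,\ 1\leq j\leq r\}$ is a $K\{\tau\}$-basis of $\rho_k\mot$ (this is precisely Lemma \ref{lem:k-tau-basis-of-prolongation}), \emph{not} a $K$-basis. Indeed $\mot$ is free of finite rank over $K\{\tau\}$, hence infinite-dimensional as a $K$-vector space, so finitely many elements $D_ie_j$ cannot $K$-span $\rho_k\mot$; concretely, $\tau(D_ie_j)$ is not in their $K$-span. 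Likewise, in the abelian case a $K[t]$-basis of $\mot$ is not a $K$-basis. The fix is to run your filtration argument with an honest (infinite) $K$-basis $(m_\lambda)_{\lambda}$ of $\mot$: the exact sequences of Remark \ref{rem:prolongation-is-extension} identify the graded pieces of the filtration $\rho_0\mot\subseteq\cdots\subseteq\rho_k\mot$ with copies of $\mot$, and $D_i$ sends $(m_\lambda)$ to a $K$-basis of the $i$-th graded piece, so $\{D_im_\lambda\}$ is a $K$-basis of $\rho_k\mot$ and the listed relations are the only ones. With that substitution your argument is complete; as written, the basis claim is false.
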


\begin{proof}
Applying relation \eqref{item:second-relation} above to $a=t$, leads to
\[ D_i(tm)= t\cdot D_im + 1\cdot D_{i-1}m \]
for all $m\in \mot$. Hence, $ t\cdot D_im=D_i(tm)-D_{i-1}m$.\\
This shows that $K[t]$-multiples of the $D_im$ are in the $K$-span of all $D_i(m')$, and therefore
$\rho_k\mot$ is generated by all  $D_im$ as a $K$-vector space.

Restricting relation \eqref{item:second-relation} to $a\in K$, we obtain $D_i(a\cdot m)=a\cdot D_im$
for all $a\in K$ and $m\in \mot$. Hence, the relations above reduce to
\[  D_i(x_1m_1+x_2m_2)= x_1\cdot D_im_1+x_2\cdot D_im_2 \]
for all $m_1,m_2\in \mot$, $x_1,x_2\in K$ and $i=0,\ldots, k$.

The given actions are clear from the equation above and the definition of~$\rho_k\mot$.
\end{proof}

\begin{thm}\label{thm:prolongation-motive-abelian}
Let $\mot$ be a $t$-motive. Then the $k$-th prolongation  $\rho_k\mot$ is a $t$-motive
 for all $k\geq 0$.

If  $\mot$ is abelian, then so is $\rho_k\mot$.
\end{thm}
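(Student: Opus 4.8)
The plan is to verify the two defining axioms of a $t$-motive for $\rho_k\mot$, then handle abelianness separately using the short exact sequence from Remark~\ref{rem:prolongation-is-extension}. First I would establish that $\rho_k\mot$ is finitely generated and free as a $K\{\tau\}$-module. By Lemma~\ref{lem:prolongation-as-K-v-s}, $\rho_k\mot$ is spanned over $K$ by the symbols $D_im$ with $0\le i\le k$, and the $\tau$-action satisfies $\tau(D_im)=D_i(\tau(m))$, which means each summand $D_i\mot\cong\mot$ is $\tau$-stable as a $K\{\tau\}$-module. Thus $\rho_k\mot\cong\mot^{\oplus(k+1)}$ as $K\{\tau\}$-modules (the $t$-action mixes the summands, but the $\tau$-action does not), so freeness and finite generation over $K\{\tau\}$ are immediate from the corresponding property of $\mot$.

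Next I would check the condition $(t-\theta)^{\ell}(\rho_k\mot)\subseteq K[t]\cdot\tau(\rho_k\mot)$ for some $\ell$. The key observation is that multiplication by $(t-\theta)$ on $\rho_k\mot$ is ``upper triangular'' with respect to the filtration by the $D_i$-degree: from Lemma~\ref{lem:prolongation-as-K-v-s} we have $t\cdot D_im=D_i(tm)-D_{i-1}m$, so $(t-\theta)D_im=D_i((t-\theta)m)-D_{i-1}m$, where the off-diagonal term $D_{i-1}m$ lowers the index. I would use the short exact sequence \eqref{eq:short-exact-sequence} together with induction on $k$: if $\ell_0$ works for $\mot$, then on the sub and quotient copies of the extension the condition holds, and one combines these by raising the exponent to absorb the triangular correction terms. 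Concretely, applying $(t-\theta)^{N}$ for large $N$ and expanding via the binomial-type recursion should push everything into $K[t]\cdot\tau(\rho_k\mot)$; since $\tau(D_im)=D_i(\tau m)$ and $(t-\theta)^{\ell_0}m\in K[t]\tau(\mot)$ for each level, a suitable $\ell=\ell(k)$ (roughly $\ell_0(k+1)$) suffices.

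For the abelianness claim, I would argue that the class of abelian $t$-motives is closed under extensions, and then invoke that $\rho_k\mot$ is a $(k+1)$-fold self-extension of $\mot$, as recorded in Remark~\ref{rem:prolongation-is-extension}. Since an abelian $t$-motive is precisely one that is finitely generated (equivalently free) as a $K[t]$-module, and finite generation over $K[t]$ is preserved under extensions of $K[t]$-modules, any iterated extension of the abelian $\mot$ with itself is again finitely generated over $K[t]$. Applying this inductively along the exact sequences from Remark~\ref{rem:prolongation-is-extension} gives that $\rho_k\mot$ is $K[t]$-finitely generated, hence abelian.

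I expect the main obstacle to be the bookkeeping in the second axiom: tracking how the index-lowering term $D_{i-1}m$ interacts with $\tau(\rho_k\mot)$ and pinning down an explicit $\ell$. The subtlety is that $\tau(\rho_k\mot)=\bigoplus_i D_i\tau(\mot)$ respects the grading by $i$, whereas $(t-\theta)$ does not, so one cannot simply transport the bound for $\mot$ level by level; the triangular coupling must be controlled, which is exactly what the filtration and the induction via \eqref{eq:short-exact-sequence} are designed to handle. Everything else — freeness over $K\{\tau\}$ and closure of abelianness under extensions — should be routine once the module structure from Lemma~\ref{lem:prolongation-as-K-v-s} is in hand.
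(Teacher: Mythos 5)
Your proposal is correct and follows essentially the same route as the paper: the condition $(t-\theta)^{\ell}(\rho_k\mot)\subseteq K[t]\cdot\tau(\rho_k\mot)$ is established exactly as in the paper's proof, via the short exact sequence \eqref{eq:short-exact-sequence} and induction on $k$ with exponent $\ell(k+1)$, and abelianness via closure of finite generation over $K[t]$ under extensions. Your only deviation is cosmetic: you get $K\{\tau\}$-freeness from the direct-sum decomposition $\rho_k\mot\cong\mot^{\oplus(k+1)}$ of $K\{\tau\}$-modules (in substance the paper's Lemma~\ref{lem:k-tau-basis-of-prolongation}) instead of inducting along the same exact sequence, which is an equally valid and equivalent argument.
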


\begin{proof}
By Remark \ref{rem:prolongation-is-extension}, we have an exact sequence 
of $K[t]\{\tau\}$-modules
\[
 0\longrightarrow \rho_{k-1}\mot \longrightarrow \rho_k\mot \longrightarrow \mot \to 0 
\]
using the identification $\rho_0\mot\isom\mot$ (see Equation \eqref{eq:short-exact-sequence}).
Hence, it follows by induction on $k$ that $\rho_k\mot$ is free and finitely generated as $K\{\tau\}$-module if $\mot$ is. Furthermore, if $\ell\in \NN$ is such that
\[  (t-\theta)^\ell (\mot)\subseteq K[t] \cdot \tau(\mot), \]
we obtain
\[  (t-\theta)^\ell (\rho_k\mot)\subseteq  K[t] \cdot \tau (\rho_k\mot)+ \rho_{k-1}\mot, \]
and hence, inductively,
\[  (t-\theta)^{\ell\cdot (k+1)} (\rho_k\mot)\subseteq  K[t] \cdot \tau (\rho_k\mot).\]

Therefore, $\rho_k\mot$ is a $t$-motive.

If $\mot$ is abelian, i.e.~free and finitely generated as a $K[t]$-module, then $\rho_k\mot$ is free and finitely generated as a $K[t]$-module, since it is a $(k+1)$-fold extension of copies of~$\mot$.
\end{proof}

\begin{lem}\label{lem:k-tau-basis-of-prolongation}
Let $\mot$ be a $t$-motive, and $\vect{b}=(b_1,\ldots,b_d)$ be a $K\{\tau\}$-basis of $\mot$.
Then a $K\{\tau\}$-basis of $\rho_k\mot$ is given by
\[  \vect{Db}=(D_0b_1,\ldots, D_0b_d, D_1b_1,\ldots, D_1b_d,\ldots,\ldots, D_kb_1,\ldots, D_kb_d).\]
\end{lem}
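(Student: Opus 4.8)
The plan is to argue by induction on $k$, using the short exact sequence of $K\{\tau\}$-modules
\[ 0 \longrightarrow \rho_{k-1}\mot \longrightarrow \rho_k\mot \xrightarrow{\ \pr\ } \mot \to 0 \]
from \eqref{eq:short-exact-sequence} in Remark \ref{rem:prolongation-is-extension}, under which $\pr(D_ib_j)=0$ for $i<k$ while $\pr(D_kb_j)=b_j$ in $\mot\isom\rho_0\mot$. For $k=0$, the identification $\rho_0\mot\isom\mot$, $D_0m\mapsto m$, turns $\vect{Db}=(D_0b_1,\ldots,D_0b_d)$ into the given basis $\vect{b}$, so there is nothing to prove. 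Assume then that $(D_0b_1,\ldots,D_{k-1}b_d)$ is a $K\{\tau\}$-basis of $\rho_{k-1}\mot$.

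For the generation step, I would recall from Lemma \ref{lem:prolongation-as-K-v-s} that $\rho_k\mot$ is spanned over $K$ by the symbols $D_im$ (for $0\leq i\leq k$, $m\in\mot$), that each $D_i$ is $K$-linear, and that $\tau(D_im)=D_i(\tau m)$; iterating the latter yields $\tau^h(D_ib_j)=D_i(\tau^h b_j)$ for all $h$. Writing an arbitrary $m=\sum_{j,h}c_{jh}\tau^h(b_j)$ with $c_{jh}\in K$ (possible since $\vect{b}$ is a $K\{\tau\}$-basis of $\mot$), I then compute
\[ D_im=\sum_{j,h}c_{jh}\,\tau^h(D_ib_j)=\sum_{j=1}^d f_j\cdot D_ib_j,\qquad f_j=\sum_h c_{jh}\tau^h\in K\{\tau\}, \]
which exhibits every $D_im$, and hence all of $\rho_k\mot$, in the $K\{\tau\}$-span of the $D_ib_j$.

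For linear independence, suppose $\sum_{i=0}^k\sum_{j=1}^d f_{ij}\,D_ib_j=0$ with $f_{ij}\in K\{\tau\}$. Applying the $K\{\tau\}$-linear map $\pr$ and using $\pr(D_ib_j)=0$ for $i<k$ and $\pr(D_kb_j)=b_j$ gives $\sum_{j}f_{kj}\,b_j=0$ in $\mot$, whence $f_{kj}=0$ for all $j$ because $\vect{b}$ is a basis. The remaining relation $\sum_{i<k}\sum_j f_{ij}\,D_ib_j=0$ lives in $\rho_{k-1}\mot$, so the induction hypothesis forces all $f_{ij}=0$. This completes the induction.

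I expect no serious obstacle here: the only points requiring care are the compatibility $\tau^h(D_ib_j)=D_i(\tau^h b_j)$ and the $K$-linearity of $D_i$, both supplied by Lemma \ref{lem:prolongation-as-K-v-s}, together with the precise action of $\pr$ on the generators $D_ib_j$ recorded in Remark \ref{rem:prolongation-is-extension}. Alternatively, one could bypass the explicit independence computation by observing that the sequence \eqref{eq:short-exact-sequence} splits, since $\mot$ is free and hence projective over $K\{\tau\}$, with $D_kb_j$ lifting $b_j$; this identifies $\rho_k\mot\isom\rho_{k-1}\mot\oplus\mot$ as $K\{\tau\}$-modules and yields the claim at once.
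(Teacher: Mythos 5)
Your proof is correct and takes essentially the same route as the paper's: induction on $k$ via the short exact sequence \eqref{eq:short-exact-sequence}, with $D_kb_1,\ldots,D_kb_d$ serving as a lift of the basis of the quotient $\mot$. The paper's proof is a one-liner asserting exactly your concluding observation---that the join of a $K\{\tau\}$-basis of $\rho_{k-1}\mot$ with a preimage of a basis of $\mot$ is a basis (implicitly using that the sequence splits since $\mot$ is free)---and your explicit generation and independence computations merely supply the details the paper leaves implicit.
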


\begin{proof}
From the short exact sequence \eqref{eq:short-exact-sequence} we see that
a $K\{\tau\}$-basis of $\rho_k\mot$ is given by the join of a $K\{\tau\}$-basis of $\rho_{k-1}\mot$ and the preimage of a basis of $\mot$. As such a preimage is given by $(D_kb_1,\ldots, D_kb_d)$ the proof follows by induction.
\end{proof}

We are now going to explicitly describe the $t$-motive $\rho_k\mot$ as $K[t]$-module with $\tau$-action in the abelian case, i.e.~we give a basis as $K[t]$-module as well as a matrix representation of the $\tau$-action with respect to this $K[t]$-basis.

\smallskip

Assume that $\mot$ is an abelian $t$-motive, and let $\vect{e}=(e_1,\ldots, e_r)$ be a $K[t]$-basis of 
$\mot$. As in the previous lemma, from the short exact sequence \eqref{eq:short-exact-sequence} in Remark \ref{rem:prolongation-is-extension} we obtain that
$\vect{De}=(D_0e_1,\ldots, D_0e_r, D_1e_1,\ldots, D_1e_r,\ldots,\ldots, D_ke_1,\ldots, D_ke_r)$ is
a $K[t]$-basis of $\rho_k\mot$.

Let $\Theta\in \Mat_{r\times r}(K[t])$ be the matrix representing the $\tau$-action on $\mot$ with respect to $\vect{e}=(e_1,\ldots, e_r)$, i.e.~
\[   \tau( e_j)= \sum_{h=1}^r \Theta_{hj}e_h \]
for all $j=1,\ldots, r$, or in matrix notation
\[  \tau (\vect{e})= \vect{e}\cdot \Theta.\]
Then $\tau$ acts on $D_ie_j\in \rho_k\mot$ as
\[  \tau(D_ie_j)= D_i(\tau(e_j))=D_i(\sum_{h=1}^r \Theta_{hj}e_h)
=\sum_{h=1}^r \sum_{i_1+i_2=i} \hd{i_1}{\Theta_{hj}}\cdot D_{i_2}e_h. \]
In block matrix notation this reads as
\[  \tau(\vect{De}) =  \vect{De}\cdot  \begin{pmatrix}
\Theta & \hd{1}{\Theta}  & \hd{2}{\Theta} & \cdots& \hd{k}{\Theta} \\
0 & \Theta &  \hd{1}{\Theta} &  \ddots   & \vdots \\ 
\vdots &\ddots  & \ddots & \ddots &   \hd{2}{\Theta}\\
\vdots & & \ddots  &  \Theta &  \hd{1}{\Theta} \\
0 &  \cdots & \cdots & 0 &  \Theta
\end{pmatrix} = \vect{De}\cdot \rho_{[k]}(\Theta), \]
where we use the homomorphism $\rho_{[k]}$ defined in Equation \eqref{eq:rho_n-matrix}.


\begin{thm}\label{thm:pure-and-r-a-t}
Let $\mot$ be an abelian $t$-motive, $k\geq 0$ and $\rho_k\mot$ the $k$-th prolongation
of $\mot$.
\begin{enumerate}

\item If $\mot$ is rigid analytically trivial, then  $\rho_k\mot$ is rigid analytically trivial.
\item If $\mot$ is pure of weight $w$, then  $\rho_k\mot$ is pure of weight $w$.
\end{enumerate}
\end{thm}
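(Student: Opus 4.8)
The plan is to exploit the explicit matrix description just obtained, namely that if $\tau(\vect{e})=\vect{e}\cdot\Theta$ then $\tau(\vect{De})=\vect{De}\cdot\rho_{[k]}(\Theta)$. So I need to produce a rigid analytic trivialization for the matrix $\rho_{[k]}(\Theta)$, i.e.\ a matrix $\Upsilon_k\in\GL_{r(k+1)}(\TT)$ with $\rho_{[k]}(\Theta)\cdot\Upsilon_k^\tau=\Upsilon_k$. Given a rigid analytic trivialization $\Upsilon$ for $\mot$, satisfying $\Theta\cdot\Upsilon^\tau=\Upsilon$, the natural candidate is $\Upsilon_k:=\rho_{[k]}(\Upsilon)$. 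The key facts I would invoke are that $\rho_{[k]}$ is a ring homomorphism and that it commutes with twisting (both stated in the excerpt). Then
\[
\rho_{[k]}(\Theta)\cdot\rho_{[k]}(\Upsilon)^\tau
=\rho_{[k]}(\Theta)\cdot\rho_{[k]}(\Upsilon^\tau)
=\rho_{[k]}(\Theta\cdot\Upsilon^\tau)
=\rho_{[k]}(\Upsilon),
\]
giving exactly the required equation. It remains to check that $\rho_{[k]}(\Upsilon)$ lands in $\GL_{r(k+1)}(\TT)$: since $\TT$ is stable under the hyperdifferential operators, every entry $\hd{i}{\Upsilon}$ lies in $\Mat_{r\times r}(\TT)$, so $\rho_{[k]}(\Upsilon)\in\Mat_{r(k+1)\times r(k+1)}(\TT)$; and it is invertible because $\rho_{[k]}$ is a ring homomorphism sending the invertible $\Upsilon$ to an invertible matrix (its block-upper-triangular shape with invertible diagonal blocks $\Upsilon$ makes this transparent). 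Hence $\rho_k\mot$ is rigid analytically trivial.

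\textbf{Proving pureness.}
For pureness I expect the cleaner route to go through the extension structure rather than through a direct lattice computation. Recall from Remark~\ref{rem:prolongation-is-extension} that $\rho_k\mot$ is a $(k+1)$-fold extension of $\mot$ by itself, and purity of weight $w$ is preserved under extensions of equal-weight pure motives, so one could try to argue inductively using the short exact sequence~\eqref{eq:short-exact-sequence}. However, since the weight is defined via a $K\ps{1/t}$-lattice $H$ with $t^uH=K\ps{1/t}\cdot\tau^vH$, I would instead build the lattice directly. Starting from a lattice $H$ for $\mot$ witnessing $w=u/v$, I would take $H_k:=\rho_{[k]}$-style block-span, concretely the $K\ps{1/t}$-lattice inside $\rho_k\mot\otimes_{K[t]}K\ls{1/t}$ generated by all $D_ih$ for $h$ in a generating set of $H$ and $i=0,\dots,k$. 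The point is that the $t$-action relation $t\cdot D_im=D_i(tm)-D_{i-1}m$ and the $\tau$-action $\tau(D_im)=D_i(\tau m)$ respect this block-filtration, so that $t^uH_k$ and $K\ps{1/t}\cdot\tau^v H_k$ differ only by the off-diagonal nilpotent contributions.

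\textbf{Main obstacle.}
The main difficulty will be the pureness statement, specifically verifying $t^uH_k=K\ps{1/t}\cdot\tau^v H_k$ exactly rather than up to lower-filtration terms. The $\tau$-action on $\rho_k\mot$ is diagonal in the block decomposition (it is simply $D_i\tau$), which is encouraging, but the interaction of $\tau^v$ with the twisting of the transition matrix and with the $t$-power scaling must be controlled simultaneously across all $k+1$ blocks; the hyperderivative off-diagonal terms $\hd{i}{\Theta}$ in $\rho_{[k]}(\Theta)$ threaten to shift weights. I would handle this by checking that passing to the associated graded of the filtration by the subprolongations $\rho_l\mot$ recovers $k+1$ copies of the pure motive $\mot$ of weight $w$, and then arguing that an equal-weight successive extension of pure motives of weight $w$ is again pure of weight $w$ — reducing purity of $\rho_k\mot$ to purity of the graded pieces, which is exactly purity of $\mot$. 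I would expect to cite or briefly prove the fact that pureness of a given weight is closed under extension, as this is the cleanest way to avoid grinding through the explicit lattice equality.
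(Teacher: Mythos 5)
Your part (1) is correct and is exactly the paper's argument: the same candidate $\rho_{[k]}(\Upsilon)$, justified by the same two facts (that $\rho_{[k]}$ is a ring homomorphism and commutes with twisting), with the same block-triangular invertibility remark.

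For part (2) there is a genuine gap. You reduce purity of $\rho_k\mot$ to the claim that a successive extension of pure motives of equal weight $w$ is again pure of weight $w$, and you propose to ``cite or briefly prove'' this. That claim is precisely the crux, and you neither prove it nor can you cite it within the paper's framework: Anderson's purity is the existence of a lattice $H$ with $t^uH=K\ps{1/t}\cdot\tau^vH$ \emph{on the nose}, and extension-closedness of this lattice-theoretic condition is not formal --- one must splice a lift of a lattice for the quotient with a lattice for the sub and then control the error terms under iteration of $t^{-u}\tau^v$. Such an argument can be made to work, but it is comparable in length to the direct verification you abandoned, so your ``associated graded'' reduction, as written, just relocates the entire difficulty into an unproved lemma.

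Moreover, the obstacle you flag in the direct route --- that the off-diagonal hyperderivative blocks ``threaten to shift weights'' --- is in fact a non-issue, and dissolving it is the whole content of the paper's proof. Choose a $K\ps{1/t}$-basis $\vect{b}$ of $H$ with $\tau^v(\vect{b})=\vect{b}\cdot t^uA$, $A\in\GL_r(K\ps{1/t})$; then $\tau^v(\vect{Db})=\vect{Db}\cdot\rho_{[k]}(t^uA)$, and for $f=\sum_{j=0}^\infty x_jt^{-j}\in K\ps{1/t}$ one computes
\[
  t^{-u}\cdot\hd{n}{t^uf}=\sum_{j=0}^\infty \binom{-j+u}{n}x_j\,t^{-j-n}\in t^{-n}K\ps{1/t}\subseteq K\ps{1/t},
\]
i.e.\ hyperdifferentiation with respect to $t$ \emph{lowers} the $t$-degree by $n$, so it can only improve, never worsen, $1/t$-integrality. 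Hence $\rho_{[k]}(t^uA)=t^uM$ with $M$ block upper triangular with entries in $K\ps{1/t}$ and invertible diagonal blocks $A$, so $M\in\GL_{r(k+1)}(K\ps{1/t})$, and the lattice $\rho_kH$ generated by the $D_ib_j$ --- exactly the lattice you wrote down before turning away from it --- already satisfies $K\ps{1/t}\cdot\tau^v(\rho_kH)=t^u\rho_kH$, giving purity of weight $u/v=w$. To complete your write-up you must either carry out this short computation or genuinely prove the extension-closedness lemma; as it stands, part (2) is not a proof.
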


\begin{proof}
Let $\mot$ be given with respect to a basis $\vect{e}=(e_1,\ldots, e_r)$ by the
$\tau$-action
\[  \tau (\vect{e})= \vect{e}\cdot \Theta\]
for some $\Theta\in \Mat_{r\times r}(K[t])$.
Assume that $\mot$ is rigid analytically trivial, and that $\Upsilon\in \GL_r(\TT)$ is a
rigid analytic trivialization of $\mot$, i.e.~$\Upsilon$ satisfies the difference equation
\[    \Upsilon = \Theta \Upsilon^\tau .\]
Since twisting commutes with $\rho_{[k]}$ and $\rho_{[k]}$ is a ring homomorphism, we have
\[ \rho_{[k]}( \Theta)\left(\rho_{[k]}(\Upsilon )\right)^\tau=\rho_{[k]}( \Theta \Upsilon^\tau )
= \rho_{[k]}(\Upsilon ). \]
Since the $\tau$-action on $\rho_k\mot$ with respect to 
 $\vect{De}$ from above
 is given by $\tau(\vect{De}) =  \vect{De}\cdot \rho_{[k]}(\Theta)$, this just means that
 $ \rho_{[k]}(\Upsilon )\in \GL_{r(k+1)}(\TT)$ is a rigid analytic trivialization of $\rho_k\mot$.
 
Assume that $\mot$ is pure of weight $w$, and let $H$ be a $K\ps{1/t}$-lattice inside
$\mot\otimes_{K[t]} K\ls{1/t}$ such that
\[   t^u H=K\ps{1/t}\cdot \tau^vH \]
for appropriate $u,v\geq 1$.

After choosing a $K\ps{1/t}$-basis $\vect{b}=(b_1,\ldots, b_r)$ of $H$, we have
\[ \tau^v(\vect{b})=\vect{b}\cdot t^uA \]
for some $A\in \GL_r(K\ps{1/t})$.
By the explicit description of the $\tau$-action on $\rho_k\mot$, we therefore get
\begin{eqnarray*}
  \tau^v(\vect{Db})&=&\vect{Db}\cdot \rho_{[k]}(t^uA)\\
&=& \vect{Db}\cdot t^u  \begin{pmatrix}
A & t^{-u}\hd{1}{t^uA}  & t^{-u}\hd{2}{t^uA} & \cdots& t^{-u}\hd{k}{t^uA} \\
0 & A &  t^{-u}\hd{1}{t^uA} &  \ddots   & \vdots \\ 
\vdots &\ddots  & \ddots & \ddots &   t^{-u}\hd{2}{t^uA}\\
\vdots & & \ddots  & A &  t^{-u}\hd{1}{t^uA} \\
0 &  \cdots & \cdots & 0 &  A
\end{pmatrix}
.
\end{eqnarray*}
For Laurent series $f=\sum_{j=j_0}^\infty x_j t^{-j}$ in $1/t$ we have
\[ \hd{n}{f} =\sum_{j=j_0}^\infty \binom{-j}{n} x_j t^{-j-n}. \]
In particular, for any power series $f=\sum_{j=0}^\infty x_j t^{-j}\in K\ps{1/t}$ and $u\in \ZZ$,
\begin{eqnarray*}
  t^{-u}\cdot \hd{n}{t^uf} &=&  t^{-u}\cdot \hd{n}{\sum_{j=0}^\infty x_j t^{-j+u}}
= t^{-u}\cdot \sum_{j=0}^\infty \binom{-j+u}{n} x_j t^{-j+u-n}\\
&=& \sum_{j=0}^\infty \binom{-j+u}{n} x_j t^{-j-n}\in t^{-n}K\ps{1/t} \subseteq K\ps{1/t}.
\end{eqnarray*} 
Hence, the block upper triangular matrix above has entries in $ K\ps{1/t} $, and is moreover
invertible over $ K\ps{1/t}$, as $A$ is invertible.
Hence, by choosing $\rho_kH$ to be the $K\ps{1/t}$-lattice inside
$\rho_k\mot\otimes_{K[t]} K\ls{1/t}$ generated by $\vect{Db}$ we obtain
\[     K\ps{1/t}\cdot \tau^v(\rho_kH)= t^u\rho_kH. \]
Hence, $\rho_k\mot$ is pure of weight $\frac{u}{v}=w$. 
\end{proof}

\begin{rem}
Starting with a Drinfeld module, the associated $t$-motive is abelian, pure and rigid analytically trivial.
Hence, by taking its prolongations we obtain new abelian, pure and rigid analytically trivial $t$-motives of arbitrary dimension.
\end{rem}

\section{Prolongations of dual $t$-motives}\label{sec:prolongations-of-dual-t-motives}

Since we will use the dual $t$-motives in the proof in Section \ref{sec:algebraic-independence}, we review the construction and explicit descriptions in this case.

For the definition of a prolongation of a dual $t$-motive $\dumot$ we just transfer
the definition for the $t$-motives above.

\begin{defn}\label{def:prolongation-dual-motive}
For a dual $t$-motive $\dumot$ over $\bar{K}[t]$ and $k\geq 0$, the {\markdef $k$-th prolongation} of 
$\dumot$ is the $\bar{K}[t]$-module $\rho_k\dumot$ which is generated by symbols
$D_im$, for $i=0,\ldots, k$, $m\in \dumot$, subject to the relations
\begin{enumerate}
\item $D_i(m_1+m_2)=D_im_1+D_im_2$, 
\item $D_i(a\cdot m)=\sum_{i_1+i_2=i} \hd{i_1}{a}\cdot D_{i_2}m$,
\end{enumerate}
 for all $m,m_1,m_2\in \dumot$, $a\in \bar{K}[t]$ and $i=0,\ldots, k$.
 The semi-linear $\sigma$-action on $\rho_k\mot$ is given by
 \[   \sigma( a\cdot D_im)=a^{\sigma}\cdot D_i(\sigma(m)). \]
 for  $a\in \bar{K}[t]$, $m\in \dumot$.
\end{defn}

We obtain similar explicit descriptions as for abelian $t$-motives.

\begin{prop}
Let $\dumot$ be a $t$-finite dual $t$-motive with $\bar{K}[t]$-basis 
$\vect{e}=(e_1,\ldots, e_r)$ and $\tilde{\Theta}\in \Mat_{r\times r}(\bar{K}[t])$
the matrix such that
\[  \sigma(\vect{e})=\vect{e}\cdot \tilde{\Theta}. \]
Then $\vect{De}=(D_0e_1,\ldots, D_0e_r, D_1e_1,\ldots, D_1e_r,\ldots,\ldots, D_ke_1,\ldots, D_ke_r)$ is a $\bar{K}[t]$-basis of $\rho_k\dumot$ and
\[ \sigma(\vect{De})
=\vect{De}\cdot \rho_{[k]}(\tilde{\Theta}). \]
If $\dumot$ is rigid analytically trivial with rigid analytic trivialization $\Psi$, i.e.~
$\Psi^\sigma = \Psi\cdot \tilde{\Theta}$, then $\rho_k\dumot$ is rigid analytically trivial 
and $\rho_{[k]}(\Psi)$ is a rigid analytic trivialization with respect to $\vect{De}$.
\end{prop}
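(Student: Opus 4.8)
The plan is to mirror, in the $\sigma$-semilinear setting, the three facts already established for abelian $t$-motives: the basis statement (following Lemma~\ref{lem:k-tau-basis-of-prolongation} and the short exact sequence of Remark~\ref{rem:prolongation-is-extension}), the matrix identity for the twist (the explicit computation preceding Theorem~\ref{thm:pure-and-r-a-t}), and part~(1) of Theorem~\ref{thm:pure-and-r-a-t}. The only changes are to replace $K[t]$ by $\bar{K}[t]$, $\tau$ by $\sigma$, and $\Theta$ by $\tilde{\Theta}$; since every structural input used there (the Leibniz rule for the $\hde{n}$, the commutation of $\rho_{[k]}$ with twisting, the stability of $\TT$ under the $\hde{n}$, and the ring-homomorphism property of $\rho_{[k]}$) holds verbatim for $\sigma$, the arguments transfer with essentially no new content.

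First I would record the dual analog of Remark~\ref{rem:prolongation-is-extension}: the $\sigma$-action of Definition~\ref{def:prolongation-dual-motive} is well-defined, and for $0\le l<k$ one obtains a short exact sequence of $\bar{K}[t]\{\sigma\}$-modules
\[ 0\longrightarrow \rho_l\dumot \longrightarrow \rho_k\dumot \xrightarrow{\pr} \rho_{k-l-1}\dumot \to 0, \]
in particular, with $l=k-1$, the sequence $0\to\rho_{k-1}\dumot\to\rho_k\dumot\to\dumot\to 0$. From this, exactly as in the paragraph following Lemma~\ref{lem:k-tau-basis-of-prolongation}, the claim that $\vect{De}$ is a $\bar{K}[t]$-basis of $\rho_k\dumot$ follows by induction on $k$: a $\bar{K}[t]$-basis of $\rho_k\dumot$ is the join of a basis of $\rho_{k-1}\dumot$ with any preimage of a basis of $\dumot$, and $(D_ke_1,\ldots,D_ke_r)$ is such a preimage.

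Next I would verify the twist identity by the same direct computation. Using $\sigma(a\cdot D_im)=a^{\sigma}D_i(\sigma m)$ together with relations (1)--(2) of Definition~\ref{def:prolongation-dual-motive},
\[ \sigma(D_ie_j)=D_i(\sigma e_j)=D_i\!\left(\sum_{h=1}^r\tilde{\Theta}_{hj}e_h\right)=\sum_{h=1}^r\sum_{i_1+i_2=i}\hd{i_1}{\tilde{\Theta}_{hj}}\,D_{i_2}e_h, \]
which in block form is precisely $\sigma(\vect{De})=\vect{De}\cdot\rho_{[k]}(\tilde{\Theta})$ by the definition of $\rho_{[k]}$ in \eqref{eq:rho_n-matrix}. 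For the last assertion I would apply $\rho_{[k]}$ to the difference equation $\Psi^\sigma=\Psi\tilde{\Theta}$: since $\rho_{[k]}$ is a ring homomorphism commuting with twisting,
\[ \big(\rho_{[k]}(\Psi)\big)^\sigma=\rho_{[k]}(\Psi^\sigma)=\rho_{[k]}(\Psi\tilde{\Theta})=\rho_{[k]}(\Psi)\,\rho_{[k]}(\tilde{\Theta}), \]
which is exactly the defining difference equation of a rigid analytic trivialization for $\rho_k\dumot$ relative to $\vect{De}$. It then remains to check $\rho_{[k]}(\Psi)\in\GL_{r(k+1)}(\TT)$: its entries are hyperderivatives of entries of $\Psi$, hence lie in $\TT$ since $\TT$ is stable under all $\hde{n}$, and $\rho_{[k]}(\Psi)\,\rho_{[k]}(\Psi^{-1})=\rho_{[k]}(\one_r)=\one_{r(k+1)}$ with $\rho_{[k]}(\Psi^{-1})$ again defined over $\TT$, so $\rho_{[k]}(\Psi)$ is invertible over $\TT$.

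The individual computations are entirely routine; I do not expect a genuine obstacle, only careful bookkeeping. The single point that deserves explicit verification rather than appeal to the $t$-motive case is the well-definedness of the $\sigma$-action and the resulting short exact sequence, since these underpin the basis claim in part one and are the only facts not already proved in the $\sigma$-setting elsewhere in the paper. Once those are in place, the twist identity and the transfer of rigid analytic triviality are immediate consequences of $\rho_{[k]}$ being a twisting-equivariant ring homomorphism preserving $\GL$ over $\TT$.
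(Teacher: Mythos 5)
Your proposal is correct and follows exactly the route the paper intends: its entire proof of this proposition is the sentence ``The proof is along the same lines as for $t$-motives,'' and you have faithfully carried out that transfer --- the short exact sequence and induction for the basis claim, the direct Leibniz computation for $\sigma(\vect{De})=\vect{De}\cdot\rho_{[k]}(\tilde{\Theta})$, and the twisting-equivariant ring-homomorphism property of $\rho_{[k]}$ (plus stability of $\TT$ under the $\hde{n}$) for the rigid analytic trivialization. Your explicit checks of the well-definedness of the $\sigma$-action and of $\rho_{[k]}(\Psi)\in\GL_{r(k+1)}(\TT)$ are precisely the routine details the paper leaves implicit.
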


\begin{proof}
The proof is along the same lines as for $t$-motives. 
\end{proof}

\section{Prolongations of $t$-modules}\label{sec:prolongations-of-t-modules}

\begin{defn}
Let $(E,\Phi)$ be a $t$-module, and $\mathsf{E}$ the corresponding $t$-motive. Then we define the $k$-th prolongation $(\rho_kE,\rho_k\Phi)$ of $(E,\Phi)$
to be the $t$-module associated to $\rho_k\mathsf{E}$.
\end{defn}

\begin{thm}
Let $(E,\Phi)$ be a $t$-module of dimension $d$, and
\[   \Phi_t=A_0+A_1\tau+\ldots +A_s\tau^s \in \Mat_{d\times d}(K\{\tau\}) \]
with repect to some isomorphism $E\isom \GG_a^d$.

Then the  $k$-th prolongation $(\rho_kE,\rho_k\Phi)$ of $(E,\Phi)$ is of dimension $d(k+1)$
and $(\rho_k\Phi)_t$ is given in block diagonal form as
\[ (\rho_k\Phi)_t= \begin{pmatrix}
A_0 & 0 & \cdots &\cdots & 0 \\
-\one_d & \ddots & \ddots && \vdots \\
0 & \ddots & \ddots & \ddots& \vdots \\
 \vdots  & \ddots & \ddots & \ddots& 0\\
 0&\cdots &0 & -\one_d & A_0
\end{pmatrix}+ 
\diag(A_1)\tau+\ldots + \diag(A_s)\tau^s,
\]
where $\one_d$ is the $(d\times d)$-identity matrix, and $\diag(A_i)$ is the block diagonal matrix with
diagonal entries all equal to $A_i$ for $i=1,\ldots, s$. 
\end{thm}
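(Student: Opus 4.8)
The plan is to carry out the entire computation on the side of the $t$-motive $\mathsf{E}$ and its prolongation $\rho_k\mathsf{E}$, and only at the very end translate back through Anderson's anti-equivalence. First I would record the relevant half of the dictionary of Section~\ref{subsec:conventions}: the $t$-motive $\mathsf{E}$ is free of rank $d$ over $K\{\tau\}$ with basis $\vect{b}=(b_1,\ldots,b_d)$ given by the coordinate projections of $E\isom\GG_a^d$, and the decomposition $\Phi_t=\sum_{m=0}^s A_m\tau^m$ corresponds to the $t$-action $t\cdot\vect{b}=\vect{b}\cdot M$ with $M=\sum_{m=0}^s \transp{A_m}\tau^m\in\Mat_{d\times d}(K\{\tau\})$, the transpose arising because a basis written as a row vector forces $t\cdot b_j=\sum_l M_{lj}b_l$ to carry the entry $(A_m)_{jl}$. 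The dimension statement is then immediate: by Lemma~\ref{lem:k-tau-basis-of-prolongation} the family $\vect{Db}$ is a $K\{\tau\}$-basis of $\rho_k\mathsf{E}$ of cardinality $d(k+1)$, so the associated $t$-module $\rho_k E$ has dimension $d(k+1)$.

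The core computation is the $t$-action on $\rho_k\mathsf{E}$ with respect to $\vect{Db}$. Using Lemma~\ref{lem:prolongation-as-K-v-s} I have $t\cdot D_i b_j=D_i(t b_j)-D_{i-1}b_j$. Now $t b_j=\sum_l M_{lj}\,b_l$ where each $M_{lj}\in K\{\tau\}$ is a $t$-constant, i.e.~a $K$-linear combination of powers of $\tau$. Consequently the second prolongation relation collapses, since all hyperderivatives $\hde{i_1}$ of these scalar coefficients vanish for $i_1\geq 1$, and the relation $\tau(D_i m)=D_i(\tau m)$ lets each $\tau^m$ pass through $D_i$; hence $D_i(t b_j)=\sum_l M_{lj}\cdot D_i b_l$ and therefore $t\cdot D_i b_j=\sum_l M_{lj}\,D_i b_l-D_{i-1}b_j$. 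Assembling this over the block-ordered basis $\vect{Db}$ yields the $t$-action matrix
\[ \tilde M=\begin{pmatrix} M & -\one_d & & \\ & M & \ddots & \\ & & \ddots & -\one_d \\ & & & M \end{pmatrix}, \]
block upper bidiagonal with $M$ on the diagonal and $-\one_d$ on the first super-diagonal. The decisive point, in contrast with the $\rho_{[k]}(\Theta)$ appearing for abelian $t$-motives, is that only the single term $-\one_d$ (and no higher off-diagonal blocks) occurs here, precisely because the subdiagonal stems solely from $\hd{1}{t}=1$ while the coefficient matrices $M_{lj}$ are $t$-constant.

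Finally I would translate $\tilde M$ back to the $t$-module by the same dictionary, transposing the coefficient of each power of $\tau$. Writing $\tilde M=\sum_{m\geq 0}\tilde B_m\tau^m$, the constant part $\tilde B_0$ is block bidiagonal with $\transp{A_0}$ on the diagonal and $-\one_d$ on the super-diagonal, while for $m\geq 1$ one has $\tilde B_m=\diag(\transp{A_m})$ block diagonal. Then $(\rho_k\Phi)_t=\sum_m \transp{\tilde B_m}\tau^m$, so that $\transp{\tilde B_0}$ becomes block bidiagonal with $A_0$ on the diagonal and $-\one_d$ moved to the sub-diagonal, and $\transp{\tilde B_m}=\diag(A_m)$ for $m\geq 1$; this is exactly the asserted formula. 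The main obstacle is bookkeeping the transposition of Anderson's anti-equivalence consistently in both directions: it is this single transposition that simultaneously flips the super-diagonal $-\one_d$ of $\tilde M$ into the sub-diagonal $-\one_d$ of $(\rho_k\Phi)_t$ and restores each $A_m$ from $\transp{A_m}$, and one must also verify that the higher $\tau$-terms acquire no off-diagonal contribution, which holds because the subdiagonal originates only from the scalar $\hd{1}{t}$ and not from any $\tau$-power.
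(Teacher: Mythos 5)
Your proposal is correct and takes essentially the same route as the paper's own proof: both pass to the $t$-motive side, use the $K\{\tau\}$-basis $\vect{Db}$ of the prolongation (the paper's $\vect{De^\vee}$) together with the relation $t\cdot D_ib_j=D_i(tb_j)-D_{i-1}b_j$ to obtain the block upper bidiagonal matrix with $\transp{\Phi_t}$ on the diagonal and $-\one_d$ on the superdiagonal, and then transpose back through Anderson's anti-equivalence to get the asserted form of $(\rho_k\Phi)_t$. Your explicit check that the $K\{\tau\}$-coefficients are $t$-constant, so that no higher off-diagonal blocks can occur, merely spells out a step the paper leaves implicit.
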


\begin{proof}
Let $\vect{e}=(e_1,\ldots,e_d)$ be the basis of $E$ corresponding to the isomorphism 
$E\isom \GG_a^d$, and hence the $t$-action is given by
\[  t(\vect{e})=\vect{e}\cdot \Phi_t. \]
Then a $K\{\tau\}$-basis for the $t$-motive $\mathsf{E}$ is given by the dual basis 
$\vect{e^\vee}=(e_1^\vee,\ldots,e_d^\vee)$ and the $t$-action on $\mathsf{E}$ is given by
\[  t(\vect{e^\vee})=\vect{e^\vee}\cdot \transp{\Phi_t}. \]
By Lemma \ref{lem:k-tau-basis-of-prolongation}, a $K\{\tau\}$-basis  of $\rho_k\mathsf{E}$ is given
by
\[ \vect{De^\vee}=(D_0e_1^\vee,\ldots, D_0e_d^\vee, D_1e_1^\vee,\ldots, D_1e_d^\vee,\ldots,\ldots, D_ke_1^\vee,\ldots, D_ke_d^\vee),\]
and we have
\[  t(D_ie_j^\vee)= D_i(te_j^\vee) - D_{i-1}e_j^\vee \]
for $i=0,\ldots,k$ and $j=1,\ldots, d$, where we set $D_{-1}e_j^\vee=0$. In block matrix notation this is just
\[ t(\vect{De^\vee})=\vect{De^\vee}\cdot 
\begin{pmatrix}
\transp{\Phi_t} & -\one_d & 0 &  \cdots & 0 \\
0 & \transp{\Phi_t} &  \ddots &  \ddots &  \vdots \\
\vdots &  \ddots& \ddots & \ddots & 0 \\
\vdots & &  \ddots& \ddots & -\one_d\\
0 & \cdots& \cdots & 0&  \transp{\Phi_t} 
\end{pmatrix}.
\]
This finally shows that $\rho_kE$ is isomorphic to $\GG_a^{d(k+1)}$ with basis $\vect{De}$, the dual basis of $\vect{De^\vee}$, and the $t$-action is given by
\[ t(\vect{De})= \vect{De}\cdot \begin{pmatrix} \Phi_t & 0 & \cdots& \cdots & 0 \\
-\one_d & \Phi_t & \ddots & & \vdots \\
0 & \ddots & \ddots & \ddots &\vdots \\
\vdots &  \ddots& \ddots & \ddots & 0 \\
0 & \cdots& 0 & -\one_d &  \Phi_t 
\end{pmatrix}. \]
Hence,
\[ (\rho_k\Phi)_t= \begin{pmatrix}
A_0 & 0 & \cdots &\cdots & 0 \\
-\one_d & \ddots & \ddots && \vdots \\
0 & \ddots & \ddots & \ddots& \vdots \\
 \vdots  & \ddots & \ddots & \ddots& 0\\
 0&\cdots &0 & -\one_d & A_0
\end{pmatrix}+ 
\diag(A_1)\tau+\ldots + \diag(A_s)\tau^s.
\]

\end{proof}

\section{Prolongations of tensor powers of the Carlitz motive}\label{sec:carlitz-case}

In this section, we apply the constructions of prolongations to the tensor powers of the Carlitz module, the Carlitz motive, as well as the dual Carlitz motive.

Let us first recall the (dual) Carlitz motive and its tensor powers. The Carlitz module $(C,\phi)$ is given by $C\isom \GG_a$ and
\[ \phi:A\to \End(\GG_{a,K})=K\{\tau\}, f\mapsto \phi_f \]
given by  $\phi_t=\theta+\tau$.
The Carlitz motive $\textsf{C}=\Hom_K(C,\GG_a)\isom K\{\tau\}$ is also free of rank $1$ as $K[t]$-module, and with respect to the basis element $e=1\in K\{\tau\}\isom \textsf{C}$ the $\tau$-action is given by
$\tau(e)=e\cdot (t-\theta).$

The $n$-th tensor power of the Carlitz motive $\textsf{C}$ is the $K[t]$-module
\[ \textsf{C}^{\otimes n}=\underbrace{\textsf{C}\otimes_{K[t]}\ldots \otimes_{K[t]}\textsf{C}}_{n-\text{times}} \]
with diagonal $\tau$-action. I.e.~on the canonical basis element $e_{\otimes n}$, we have 
\[  \tau(e_{\otimes n})=e_{\otimes n}\cdot (t-\theta)^n. \]

Let $\omega\in \TT$
be the Anderson-Thakur function.
Then a rigid analytic trivialization for $\textsf{C}$ is given by $\frac{1}{\omega}$,
since $\omega$ satisfies the difference equation $\omega^\tau=(t-\theta)\omega$.
Hence, a rigid analytic trivialization for $\textsf{C}^{\otimes n}$ is given by
$\omega^{-n}$.

\bigskip

The dual Carlitz motive $\mathfrak{C}$ is the $\bar{K}[t]$-module of rank $1$ with $\sigma$-action given by
\[  \sigma(e)=e\cdot (t-\theta), \]
with respect to some basis element $e\in \mathfrak{C}$, and its $n$-th tensor power
$\mathfrak{C}^{\otimes n}$ has $\sigma$-action given by
\[ \sigma(e_{\otimes n})=e_{\otimes n}\cdot (t-\theta)^n. \]

The entire function $\Omega(t):=\frac{1}{(t-\theta)\omega(t)}$ is a rigid analytic trivialization 
of the Carlitz dual $t$-motive $\mathfrak{C}$, since
\[  \Omega^\sigma=\left(  \frac{1}{\omega^\tau}\right)^\sigma= \frac{1}{\omega}
= (t-\theta)\Omega. \]
Therefore, $\Omega(t)^n$ is a rigid analytic trivialization for the $n$-th tensor power $\mathfrak{C}^{\otimes n}$.

\begin{prop}\label{prop:prolong-tensor-power}
The $k$-th prolongation of the motive $\textsf{C}^{\otimes n}$ is the $K[t]$-module
$\rho_k(\textsf{C}^{\otimes n}):=K[t]^{k+1}$ with $\tau$-action given by
\[  \tau\begin{pmatrix}
f_0\\ f_1\\ \vdots \\ f_k \end{pmatrix} = 
\begin{pmatrix}
t-\theta & 1 & 0 &  \cdots & 0 \\
0 & t-\theta &  \ddots &  \ddots &  \vdots \\
\vdots &  \ddots& \ddots & \ddots & 0 \\
\vdots & &  \ddots& \ddots & 1\\
0 & \cdots& \cdots & 0&  t-\theta 
\end{pmatrix}^n \cdot   \begin{pmatrix}
f_0^\tau\\ f_1^\tau\\ \vdots \\ f_k^\tau \end{pmatrix}
.\]
Its rigid analytic trivialization is given by
\[   \Upsilon = \rho_{[k]}(\omega^{-n})=  \begin{pmatrix} \omega & \hd{1}{\omega} & \cdots & \hd{k}{\omega} \\ 0 & \omega & \ddots & \vdots \\ \vdots & \ddots & \ddots & \hd{1}{\omega} \\ 0 & \cdots & 0 & \omega \end{pmatrix}^{-n}
. \] 
\end{prop}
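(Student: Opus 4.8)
The plan is to obtain both assertions directly from the explicit description of prolongations of abelian $t$-motives developed in Section \ref{sec:prolongations-of-t-motives}, specialized to the rank-one motive $\textsf{C}^{\otimes n}$. Recall from the present section that $\textsf{C}^{\otimes n}$ is free of rank one over $K[t]$ with basis element $e_{\otimes n}$ and $\tau$-action given by $\tau(e_{\otimes n}) = e_{\otimes n}\cdot(t-\theta)^n$, so that the representing matrix is $\Theta = (t-\theta)^n \in \Mat_{1\times 1}(K[t])$.

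First I would invoke the computation preceding Theorem \ref{thm:pure-and-r-a-t}: for an abelian $t$-motive with $K[t]$-basis $\vect{e}$ and $\tau(\vect{e}) = \vect{e}\cdot\Theta$, the tuple $\vect{De} = (D_0e_{\otimes n},\ldots,D_ke_{\otimes n})$ is a $K[t]$-basis of $\rho_k\mot$ with $\tau(\vect{De}) = \vect{De}\cdot\rho_{[k]}(\Theta)$. In the case at hand this identifies $\rho_k(\textsf{C}^{\otimes n})$ with $K[t]^{k+1}$ and shows that the $\tau$-action is represented by $\rho_{[k]}((t-\theta)^n)$. The key point is that $\rho_{[k]}$ is a ring homomorphism (see the discussion after \eqref{eq:rho_n-matrix}), whence $\rho_{[k]}((t-\theta)^n) = \rho_{[k]}(t-\theta)^n$. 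Since $\hd{1}{t-\theta} = 1$ and $\hd{m}{t-\theta} = 0$ for $m\geq 2$, the matrix $\rho_{[k]}(t-\theta)$ is precisely the bidiagonal block with $t-\theta$ on the diagonal and $1$ on the superdiagonal; its $n$-th power is exactly the matrix displayed in the statement. Passing from the row-vector convention $\tau(\vect{De}) = \vect{De}\cdot\Theta$ to the column-vector convention $\tau(x) = \vect{De}\cdot\Theta\cdot\vect{x}^\tau$ then yields the asserted formula for the $\tau$-action.

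For the rigid analytic trivialization I would first note, as recalled in this section, that $\omega^{-n}$ is a rigid analytic trivialization of $\textsf{C}^{\otimes n}$: indeed $\omega^\tau = (t-\theta)\omega$ gives $(\omega^{-n})^\tau = (t-\theta)^{-n}\omega^{-n}$, hence $\omega^{-n} = (t-\theta)^n(\omega^{-n})^\tau$. Theorem \ref{thm:pure-and-r-a-t}(1), whose proof shows that $\rho_{[k]}$ carries a trivialization of $\mot$ to one of $\rho_k\mot$, then guarantees that $\rho_{[k]}(\omega^{-n})$ is a rigid analytic trivialization of $\rho_k(\textsf{C}^{\otimes n})$ with respect to $\vect{De}$. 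Applying the homomorphism property once more gives $\rho_{[k]}(\omega^{-n}) = \rho_{[k]}(\omega)^{-n}$, and $\rho_{[k]}(\omega)$ is exactly the upper-triangular matrix with diagonal $\omega$ and superdiagonal entries $\hd{1}{\omega},\ldots,\hd{k}{\omega}$, so this equals the displayed $\Upsilon$.

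I do not anticipate a genuine obstacle: the entire content is the specialization of Section \ref{sec:prolongations-of-t-motives} to a rank-one motive, and the only step worth spelling out is that the ring-homomorphism property of $\rho_{[k]}$ converts both $\rho_{[k]}((t-\theta)^n)$ and $\rho_{[k]}(\omega^{-n})$ into the $n$-th powers appearing in the statement. If one prefers a self-contained verification one can instead compute $\hd{m}{(t-\theta)^n} = \binom{n}{m}(t-\theta)^{n-m}$ entrywise, but this is not necessary.
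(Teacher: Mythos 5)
Your proposal is correct and follows the same route as the paper, whose proof simply cites the general description of prolongations from Section \ref{sec:prolongations-of-t-motives} together with the observation that $\rho_{[k]}(t-\theta)$ is the displayed bidiagonal matrix; your use of the ring-homomorphism property of $\rho_{[k]}$ to get $\rho_{[k]}((t-\theta)^n)=\rho_{[k]}(t-\theta)^n$ and $\rho_{[k]}(\omega^{-n})=\rho_{[k]}(\omega)^{-n}$, and of Theorem \ref{thm:pure-and-r-a-t}(1) for the trivialization, is exactly the implicit content of that reference. You merely spell out details the paper leaves to the reader.
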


\begin{proof}
This follows from the general description in Section \ref{sec:prolongations-of-t-motives}. One just has to
recognize that $\rho_{[k]}(t-\theta)$ is just the matrix
\[ \begin{pmatrix}
t-\theta & 1 & 0 &  \cdots & 0 \\
0 & t-\theta &  \ddots &  \ddots &  \vdots \\
\vdots &  \ddots& \ddots & \ddots & 0 \\
\vdots & &  \ddots& \ddots & 1\\
0 & \cdots& \cdots & 0&  t-\theta 
\end{pmatrix}. \]
%
%
\end{proof}

\begin{prop}\label{prop:prolong-dual-tensor-power}
The $k$-th prolongation of the dual motive $\mathfrak{C}^{\otimes n}$ is the $\bar{K}[t]$-module
$\rho_k(\mathfrak{C}^{\otimes n}):=\bar{K}[t]^{k+1}$ with $\sigma$-action given by
\[  \sigma\begin{pmatrix}
f_0\\ f_1\\ \vdots \\ f_k \end{pmatrix} = 
\begin{pmatrix}
t-\theta & 1 & 0 &  \cdots & 0 \\
0 & t-\theta &  \ddots &  \ddots &  \vdots \\
\vdots &  \ddots& \ddots & \ddots & 0 \\
\vdots & &  \ddots& \ddots & 1\\
0 & \cdots& \cdots & 0&  t-\theta 
\end{pmatrix}^n \cdot   \begin{pmatrix}
f_0^\sigma\\ f_1^\sigma\\ \vdots \\ f_k^\sigma \end{pmatrix}
.\]
Its rigid analytic trivialization is given by
\[   \Psi = \rho_{[k]}(\Omega^{n}) =  \begin{pmatrix}
\Omega^n & \hd{1}{\Omega^n}  & \hd{2}{\Omega^n} & \cdots&\hd{k}{\Omega^n} \\
0 & \Omega^n & \hd{1}{\Omega^n} &  \ddots   & \vdots \\ 
\vdots &\ddots  & \ddots & \ddots &   \hd{2}{\Omega^n}\\
\vdots & & \ddots  & \Omega^n &  \hd{1}{\Omega^n} \\
0 &  \cdots & \cdots & 0 &  \Omega^n
\end{pmatrix}.  \]
\end{prop}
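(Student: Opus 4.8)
The plan is to apply the explicit description from the Proposition in Section~\ref{sec:prolongations-of-dual-t-motives} to the particular dual $t$-motive $\dumot = \mathfrak{C}^{\otimes n}$, exactly as Proposition~\ref{prop:prolong-tensor-power} does in the non-dual case. First I would observe that $\mathfrak{C}^{\otimes n}$ is a $t$-finite dual $t$-motive of rank $1$, with $\bar{K}[t]$-basis $\vect{e}=(e_{\otimes n})$ and $\sigma$-matrix $\tilde{\Theta}=(t-\theta)^n$ (this is recorded in the discussion preceding the statement). The cited proposition then immediately yields that $\vect{De}=(D_0e_{\otimes n},\ldots,D_ke_{\otimes n})$ is a $\bar{K}[t]$-basis of $\rho_k(\mathfrak{C}^{\otimes n})$ --- which identifies the module with $\bar{K}[t]^{k+1}$ --- and that $\sigma(\vect{De})=\vect{De}\cdot\rho_{[k]}\!\left((t-\theta)^n\right)$.

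The remaining point for the first assertion is to identify $\rho_{[k]}\!\left((t-\theta)^n\right)$ with the displayed $\sigma$-matrix. For this I would use that $\rho_{[k]}$ is a ring homomorphism (established in Section~\ref{sec:prolongations-of-t-motives}), so that $\rho_{[k]}\!\left((t-\theta)^n\right)=\rho_{[k]}(t-\theta)^n$. Then I compute $\rho_{[k]}(t-\theta)$ directly from \eqref{eq:rho_n}: since $\hd{1}{t-\theta}=1$ and $\hd{i}{t-\theta}=0$ for $i\geq 2$, the matrix $\rho_{[k]}(t-\theta)$ is precisely the bidiagonal matrix with $t-\theta$ on the diagonal and $1$ on the first superdiagonal. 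Raising this to the $n$-th power gives exactly the matrix in the statement.

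For the rigid analytic trivialization I would recall from the paragraph before the proposition that $\Omega^n$ satisfies $(\Omega^n)^\sigma=(t-\theta)^n\Omega^n$, i.e.~it is a rigid analytic trivialization of $\mathfrak{C}^{\otimes n}$ relative to $\tilde{\Theta}=(t-\theta)^n$. The cited proposition then gives directly that $\rho_{[k]}(\Omega^n)$ is a rigid analytic trivialization of $\rho_k(\mathfrak{C}^{\otimes n})$ with respect to $\vect{De}$, and unwinding the definition \eqref{eq:rho_n} of $\rho_{[k]}$ applied to $\Omega^n$ produces the displayed upper-triangular matrix in the hyperderivatives of $\Omega^n$. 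There is essentially no genuine obstacle here: the statement is a direct specialization of the general theory, and the only item requiring (routine) verification is the matrix identity $\rho_{[k]}\!\left((t-\theta)^n\right)=\rho_{[k]}(t-\theta)^n$, which rests solely on the ring-homomorphism property of $\rho_{[k]}$.
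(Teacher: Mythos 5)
Your proposal is correct and follows exactly the route the paper intends: the paper omits a proof of Proposition~\ref{prop:prolong-dual-tensor-power} precisely because it is the dual analogue of Proposition~\ref{prop:prolong-tensor-power}, whose proof likewise invokes the general description (here the proposition of Section~\ref{sec:prolongations-of-dual-t-motives}) together with the identification of $\rho_{[k]}(t-\theta)$ as the bidiagonal matrix with $t-\theta$ on the diagonal and $1$ on the superdiagonal. Your verification of the key matrix identity $\rho_{[k]}\!\left((t-\theta)^n\right)=\rho_{[k]}(t-\theta)^n$ via the ring-homomorphism property, and of $(\Omega^n)^\sigma=\Omega^n(t-\theta)^n$ for the trivialization, fills in the same routine details the paper leaves to the reader.
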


For the description of the corresponding $t$-modules we restrict to the
prolongations of the Carlitz module, and let the descriptions for the tensor powers as
an exercise for the reader.

\begin{prop}
The $k$-th prolongation $(\rho_kC,\rho_k\phi)$ of the Carlitz module is the $t$-module of dimension
$k+1$ with
\[  \rho_k\phi:\FF_q[t]\to \Mat_{(k+1)\times(k+1)}(K)\{\tau\} \]
given by
\[   ( \rho_k\phi)_t = 
 \begin{pmatrix}
\theta & 0 & \cdots &\cdots & 0 \\
-1 &\theta  & \ddots && \vdots \\
0 & \ddots & \ddots & \ddots& \vdots \\
 \vdots  & \ddots & \ddots & \ddots& 0\\
 0&\cdots &0 & -1 & \theta
\end{pmatrix}+ \one_{k+1}\cdot \tau. \]
\end{prop}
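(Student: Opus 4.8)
The plan is to apply the general structure theorem for prolongations of $t$-modules (the preceding unnumbered theorem) directly to the Carlitz module, so that the proof reduces to identifying the relevant parameters and substituting them into the block formula for $(\rho_k\Phi)_t$.

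First I would record the defining data of the Carlitz module as a $t$-module. Since $C\isom\GG_a$, its dimension is $d=1$, and the $t$-action $\phi_t=\theta+\tau$, written in the form $A_0+A_1\tau+\cdots+A_s\tau^s$, has $A_0=\theta$ and $A_1=1$ (both viewed as $1\times1$ matrices), with $s=1$ and all higher $A_i$ vanishing.

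Next I would substitute these into the conclusion of the general theorem. The dimension of the prolongation is $d(k+1)=k+1$, as claimed. The block lower-bidiagonal constant part has $A_0=\theta$ on the diagonal and $-\one_d=-1$ on the subdiagonal, giving exactly the displayed $(k+1)\times(k+1)$ matrix. For the twisted part, because $d=1$ the block-diagonal matrix $\diag(A_1)$ has all of its $1\times1$ diagonal blocks equal to $A_1=1$, hence $\diag(A_1)=\one_{k+1}$; and since $s=1$ there are no further twisted terms. This produces precisely $(\rho_k\phi)_t=\bigl(\text{the bidiagonal matrix}\bigr)+\one_{k+1}\,\tau$, which is the assertion.

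There is essentially no obstacle here, as the statement is a direct specialization of the general theorem. The only point requiring a moment's care is the collapse of $\diag(A_1)$ to $\one_{k+1}$ in the scalar case $d=1$, together with checking that the off-diagonal $-\one_d$ reduces to the scalar $-1$. Alternatively, one could bypass the general theorem and rerun its dual-basis computation in this special case, but invoking the already-proved theorem is the cleaner route.
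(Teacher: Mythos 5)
Your proposal is correct and matches the paper's proof, which likewise just cites the general description of $(\rho_k\Phi)_t$ from Section~\ref{sec:prolongations-of-t-modules}; your substitution $d=1$, $A_0=\theta$, $A_1=1$, $s=1$ is exactly the intended specialization. The paper leaves these details implicit, so your write-up is simply a more explicit version of the same argument.
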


\begin{proof}
This follows from the general description in Section \ref{sec:prolongations-of-t-modules}.
\end{proof}

\section{Hypertranscendence of the Anderson-Thakur function}\label{sec:omega-hypertranscendental}

In this section, we show that the Anderson-Thakur function $\omega$ is hypertranscendental, i.e.~that $\omega$ and all its hyperderivatives $\hd{n}{\omega}$ ($n>0$) are algebraically independent over the field $\bar{K}(t)$. This fact is also given by F.~Pellarin in \cite[Prop.~27]{fp:vscce} by different methods.

We first recall a fact about the evaluations of the Anderson-Thakur function $\omega$ and its hyperderivatives at roots of unity given in \cite{ba-fp:ugtsls} and \cite{am-rp:iddbcppte}.
The evaluation of $\hd{n}{\omega}$ at $t=\zeta$ will be shortly denoted by $\hd{n}{\omega}(\zeta)$.

Moreover, in this section, $K$ will denote the field $\FF_q(\theta)$.

\begin{thm}
Let $\zeta\in \bar{\FF}_q$, let $\pfrak\in \FF_q[t]$ be the minimal polynomial
of $\zeta$, and let $d=\deg(\pfrak)$ be its degree.

For $n\geq 0$, the Carlitz $\pfrak^{n+1}$-torsion extension of $K(\zeta)$ is generated by $\hd{n}{\omega}(\zeta)$, i.e.~
\[    K(\zeta)(C[\pfrak^{n+1}])=K(\zeta,\hd{n}{\omega}(\zeta)). \]
The minimal polynomial of $\omega(\zeta)$ over $K(\zeta)$ is given by
\[   X^{q^d-1}- \beta(\zeta) \in K(\zeta)[X], \] 
where $\beta(t)=\prod_{h=0}^{d-1} (t-\theta^{q^h})\in K[t]\subseteq \TT$.

For $n\geq 1$, the minimal polynomial of $\hd{n}{\omega}(\zeta)$ over $K(\zeta)(C[\pfrak^{n}])$ is given by
\[  X^{q^d}-\beta(\zeta)X-\xi_n(\zeta)\in K(\zeta)(C[\pfrak^{n}])[X], \]
 where
\[ \xi_n(t)= \sum_{l=1}^n \hd{l}{\beta}\cdot \hd{n-l}{\omega}\in \TT. \]
\end{thm}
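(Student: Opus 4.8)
The plan is to reduce everything to a single functional equation for $\omega$, transport it through the hyperdifferential operators, and then specialize at $t=\zeta$. First I would start from the defining equation $\omega^\tau=(t-\theta)\omega$. Since $\tau$ fixes the variable $t$ and raises the coefficient $\theta$ to the $q$-th power, one has $(t-\theta)^{\tau^h}=t-\theta^{q^h}$, so iterating the functional equation $d$ times yields
\[ \omega^{\tau^d}=\prod_{h=0}^{d-1}(t-\theta^{q^h})\cdot\omega=\beta\cdot\omega. \]
Applying the $\CC_\infty$-linear operator $\hde{n}$, which commutes with $\tau$ (hence with $\tau^d$) and obeys the product rule $\hd{n}{\beta\omega}=\sum_{l=0}^n\hd{l}{\beta}\hd{n-l}{\omega}$, and splitting off the $l=0$ term, I obtain
\[ (\hd{n}{\omega})^{\tau^d}=\beta\cdot\hd{n}{\omega}+\xi_n, \]
the inhomogeneous twisted-linear relation for $\hd{n}{\omega}$, with the empty-sum convention giving $\xi_0=0$ and recovering $\omega^{\tau^d}=\beta\omega$ when $n=0$.

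Next I would specialize at $t=\zeta$. Writing $\hd{n}{\omega}=\sum_i a_it^i$, the twist acts on coefficients, so $(\hd{n}{\omega})^{\tau^d}(\zeta)=\sum_i a_i^{q^d}\zeta^i$; on the other hand $(\hd{n}{\omega}(\zeta))^{q^d}=\sum_i a_i^{q^d}\zeta^{iq^d}$, and since $\zeta\in\FF_{q^d}$ satisfies $\zeta^{q^d}=\zeta$ these two coincide. Specializing the relation above therefore gives
\[ (\hd{n}{\omega}(\zeta))^{q^d}=\beta(\zeta)\,\hd{n}{\omega}(\zeta)+\xi_n(\zeta). \]
Because $\theta$ is transcendental over $\FF_q$ we have $\zeta\neq\theta^{q^h}$ for all $h$, so $\beta(\zeta)\neq 0$, and $\omega$ has no zeros in the closed unit disc, whence $\omega(\zeta)\neq 0$. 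Dividing the $n=0$ relation by $\omega(\zeta)$ shows $\omega(\zeta)$ is a root of $X^{q^d-1}-\beta(\zeta)$, while for $n\geq 1$ the element $\hd{n}{\omega}(\zeta)$ is a root of the additive polynomial $X^{q^d}-\beta(\zeta)X-\xi_n(\zeta)$, which is $\FF_q$-linear in $X$ up to the constant term.

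It then remains to verify that these candidate polynomials have coefficients in the stated fields and are genuinely minimal. For the coefficient field I would induct on $n$: since $\xi_n(\zeta)=\sum_{l=1}^n\hd{l}{\beta}(\zeta)\,\hd{n-l}{\omega}(\zeta)$ with $\hd{l}{\beta}(\zeta)\in K(\zeta)$ (as $\beta\in K[t]$ and $K[t]$ is stable under the $\hde{l}$), and each factor $\hd{n-l}{\omega}(\zeta)$ for $l\geq 1$ lies in $K(\zeta)(C[\pfrak^{n-l+1}])\subseteq K(\zeta)(C[\pfrak^{n}])$ by the generation statement for the smaller index, I get $\xi_n(\zeta)\in K(\zeta)(C[\pfrak^{n}])$. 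Separability is automatic since the linear coefficient $-\beta(\zeta)$ is nonzero. Minimality then follows from a degree count: the generation statement $K(\zeta)(C[\pfrak^{n+1}])=K(\zeta,\hd{n}{\omega}(\zeta))$ combined with the Carlitz-cyclotomic degrees $[K(\zeta)(C[\pfrak]):K(\zeta)]=q^d-1$ and $[K(\zeta)(C[\pfrak^{n+1}]):K(\zeta)(C[\pfrak^{n}])]=q^d$ forces $\omega(\zeta)$ to have degree exactly $q^d-1$ over $K(\zeta)$ and $\hd{n}{\omega}(\zeta)$ degree exactly $q^d$ over $K(\zeta)(C[\pfrak^{n}])$; as the exhibited polynomials realize these degrees and are satisfied, they are the minimal polynomials.

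The hard part will be the generation statement itself — that a single hyperderivative value $\hd{n}{\omega}(\zeta)$ cuts out the entire $\pfrak^{n+1}$-torsion field — together with the exact torsion degrees; these are the substantive inputs and I would take them from the Gauss–Thakur sum analysis of \cite{ba-fp:ugtsls} and its hyperderivative refinement in \cite{am-rp:iddbcppte}, so that the theorem is, as announced, a recollection. The genuinely self-contained part of the argument is the functional-equation computation, which is precisely what pins down the explicit shapes $X^{q^d-1}-\beta(\zeta)$ and $X^{q^d}-\beta(\zeta)X-\xi_n(\zeta)$ of the minimal polynomials.
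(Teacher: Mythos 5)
Your proposal is correct, and it draws its substantive input from exactly the two sources the paper itself uses: the paper's entire proof of this theorem is a citation, taking the generation statement $K(\zeta)(C[\pfrak^{n+1}])=K(\zeta,\hd{n}{\omega}(\zeta))$ and the occurrence of the polynomials from \cite[Thm.~3.3]{ba-fp:ugtsls}, and their minimality from \cite[Thm.~3.8 \& Rem.~3.9]{am-rp:iddbcppte}. What you do differently is twofold. First, you reconstruct the computational shell the paper leaves inside the citations: iterating $\omega^\tau=(t-\theta)\omega$ to $\omega^{\tau^d}=\beta\omega$, applying $\hde{n}$ (commutation with $\tau$ plus the Leibniz rule) to get $(\hd{n}{\omega})^{\tau^d}=\beta\cdot\hd{n}{\omega}+\xi_n$, and specializing at $t=\zeta$ via $\zeta^{q^d}=\zeta$; this is sound, including the nonvanishing statements $\beta(\zeta)\neq 0$ (transcendence of $\theta$ over $\FF_q$) and $\omega(\zeta)\neq 0$ ($\omega$ is a unit in $\TT$, its reciprocal vanishing only at the points $\theta^{q^j}$ outside the unit disc), and your induction placing $\xi_n(\zeta)$ in $K(\zeta)(C[\pfrak^{n}])$ is the right bookkeeping. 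Second --- the genuine divergence --- you do not import minimality from \cite{am-rp:iddbcppte} at all, but derive it from the generation statement together with the degrees $[K(\zeta)(C[\pfrak]):K(\zeta)]=q^d-1$ and $[K(\zeta)(C[\pfrak^{n+1}]):K(\zeta)(C[\pfrak^{n}])]=q^d$. That degree count is valid, but note it silently uses that the Carlitz cyclotomic tower over $K=\FF_q(\theta)$ is geometric (its constant field is $\FF_q$), hence linearly disjoint from the constant extension $K(\zeta)/K$, so that the degrees $\#(\FF_q[t]/\pfrak^{m})^\times$ persist after base change to $K(\zeta)$; you should state this standard fact explicitly, since without it the tower degrees over $K(\zeta)$ are unjustified. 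With that supplied, your route buys a more self-contained proof than the paper's: only the generation statement of \cite{ba-fp:ugtsls} is imported, whereas the paper defers both generation and minimality to the literature.
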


\begin{proof}
The first part is shown in \cite[Thm.~3.3]{ba-fp:ugtsls} where also the minimal polynomials occur.
The minimality of these polynomials, however, is shown in \cite[Thm.~3.8 \& Rem.~3.9]{am-rp:iddbcppte}.
\end{proof}

\begin{thm}\label{thm:hypertranscendence}
The Anderson-Thakur function $\omega(t)$ is hypertranscendental over $\bar{K}(t)$, i.e.~the set $\{\hd{n}{\omega} \mid n\geq 0\}$ is algebraically independent over $\bar{K}(t)$.
\end{thm}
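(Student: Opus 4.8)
\emph{The plan is as follows.} First I would reduce the statement to algebraic independence over $K(t)$ with $K=\FF_q(\theta)$: since $\bar{K}(t)$ is algebraic over $K(t)$, a subset of an extension field is algebraically independent over $\bar{K}(t)$ if and only if it is so over $K(t)$ (transcendence degree is unchanged under an algebraic base extension), so it suffices to work over $K(t)$, which is exactly the field for which the evaluation theorem above is stated. I would then argue by induction on $n$, proving that $\hd{n}{\omega}$ is transcendental over $K_{n-1}:=K(t)(\omega,\hd{1}{\omega},\ldots,\hd{n-1}{\omega})$, with the convention $K_{-1}:=K(t)$. Since a sequence each of whose members is transcendental over the field generated by its predecessors is algebraically independent, this yields the theorem.

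Suppose for contradiction that $\hd{n}{\omega}$ is algebraic of degree $e$ over $K_{n-1}$. After clearing denominators I get a relation $P_e\cdot(\hd{n}{\omega})^e+\cdots+P_0=0$ with $P_j\in K[t][\omega,\ldots,\hd{n-1}{\omega}]$ and $P_e\neq 0$. I would now specialize at a point $\zeta\in\bar{\FF}_q$ of degree $d$ over $\FF_q$ with minimal polynomial $\pfrak$. All of $\omega,\hd{1}{\omega},\ldots,\hd{n}{\omega}$ lie in $\TT$ and $|\zeta|_\infty=1$, so evaluation $\mathrm{ev}_\zeta$ at $t=\zeta$ is a ring homomorphism on $K[t][\omega,\ldots,\hd{n}{\omega}]$; by the theorem above $\mathrm{ev}_\zeta\big(K[t][\omega,\ldots,\hd{n-1}{\omega}]\big)\subseteq K(\zeta,\hd{n-1}{\omega}(\zeta))=K(\zeta)(C[\pfrak^{n}])$. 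Provided the leading coefficient survives, i.e.\ $\mathrm{ev}_\zeta(P_e)\neq 0$, applying $\mathrm{ev}_\zeta$ to the relation produces a nontrivial algebraic equation of degree $\leq e$ for $\hd{n}{\omega}(\zeta)$ over $K(\zeta)(C[\pfrak^{n}])$. The cited theorem, however, says that $\hd{n}{\omega}(\zeta)$ has degree exactly $q^d$ over $K(\zeta)(C[\pfrak^{n}])$ for $n\geq 1$ (and that $\omega(\zeta)$ has degree $q^d-1$ over $K(\zeta)$ in the base case $n=0$). Choosing $d$ with $q^d-1>e$ then forces a contradiction.

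The crux—and what I expect to be the main obstacle—is guaranteeing $\mathrm{ev}_\zeta(P_e)\neq 0$ for a $\zeta$ of arbitrarily large degree, since a priori the leading coefficient could vanish upon specialization and trivialize the inherited relation. Here I would use that $P_e$ is a nonzero element of the Tate algebra $\TT$, and that a nonzero function in $\TT$ has only finitely many zeros on the closed unit disc $|t|_\infty\leq 1$ (Weierstrass preparation). As $\bar{\FF}_q$ contains elements of every sufficiently large degree $d$ and all of them lie in this disc, I can select $\zeta$ with $q^d-1>e$ and $P_e(\zeta)\neq 0$, completing the contradiction. Everything else reduces to the bookkeeping of the induction together with a direct appeal to the degree formulas for the Carlitz torsion fields recorded in the preceding theorem.
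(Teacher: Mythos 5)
Your proposal is correct and takes essentially the same route as the paper's proof: both specialize a putative algebraic relation at suitable $\zeta\in\bar{\FF}_q^{\,\times}$, using that the leading coefficient is a nonzero element of $\TT$ and hence has only finitely many zeros on the closed unit disc, and then contradict the degree $q^d$ (resp.\ $q^d-1$ for $n=0$) of the minimal polynomial of $\hd{n}{\omega}(\zeta)$ over $K(\zeta)(C[\pfrak^{n}])$. The only difference is organizational—you run an induction showing each $\hd{n}{\omega}$ is transcendental over the field generated by its predecessors, whereas the paper chooses $n$ minimal with $\omega,\hd{1}{\omega},\ldots,\hd{n}{\omega}$ algebraically dependent—which is logically equivalent.
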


\begin{proof}
Since $\bar{K}(t)$ is algebraic over $K(t)$, it suffices to show algebraic independence over $K(t)$.
Now, assume for the contrary, that $\omega$ and its hyperderivatives satisfy some algebraic relation. Choose $n$ minimal such that $\omega, \hd{1}{\omega},\ldots, \hd{n}{\omega}$ are algebraically dependent, and choose a polynomial
$0\ne F(X_0,\ldots, X_n)\in K(t)[X_0,\ldots,X_n]$ such that $F(\omega, \hd{1}{\omega},\ldots, \hd{n}{\omega})=0$. Write $F=\sum_{j=0}^k f_jX_n^j$ with $f_j\in K(t)[X_0,\ldots,X_{n-1}]$ and $f_k\ne 0$. After rescaling we can even assume that the coefficients of the $f_j$ are polynomials in $t$, i.e.~$f_j\in K[t][X_0,\ldots,X_{n-1}]$.\\
As we have chosen $n$ to be minimal, and as $f_k\ne 0$, we also have 
\[ f_k(\omega,\hd{1}{\omega},\ldots,\hd{n-1}{\omega})\ne 0\in \TT. \]

Since every nonzero element of $\TT$ has only finitely many zeros in the closed unit disc, for almost all $\zeta\in \bar{\FF}_q^{\,\times}$ we have: $f_k(\omega, \hd{1}{\omega},\ldots, \hd{n-1}{\omega})|_{t=\zeta}\ne 0\in   \CC_\infty$. Hence, for such $\zeta$, $\hd{n}{\omega}(\zeta)$ is a root of the nonzero polynomial
\[ \sum_{j=0}^kf_j(\omega, \hd{1}{\omega},\ldots,\hd{n-1}{\omega})|_{t=\zeta}X_n^j \in \CC_\infty[X_n] \]
of degree $k$.

By construction, the coefficients lie in $K(C[\pfrak^n])(\zeta)$ where $\pfrak\in \FF_q[t]$ is the minimal polynomial of $\zeta$ over $ \FF_q$.
By the theorem above, the minimal polynomial of $\hd{n}{\omega}(\zeta)$ over $K(C[\pfrak^n])(\zeta)$ has degree $q^{\deg(\pfrak)}=\#  \FF_q(\zeta)$ (resp.~degree $q^{\deg(\pfrak)}-1$ if $n=0$).

Therefore, if we choose $\zeta$ such that $\#  \FF_q(\zeta)-1 >k$, this leads to a contradiction.
\end{proof}

\section{Algebraic independence of periods}\label{sec:algebraic-independence}

In this section, we prove our main theorem on the algebraic independence of the periods.

\begin{thm}\label{thm:algebraic-independence}
Let $n\in\NN$ be prime to $q$, let $C^{\otimes n}$ be the $n$-th tensor power of the Carlitz module and let
\[ \begin{pmatrix} z_1 \\ \vdots \\ z_n \end{pmatrix} \in \CC_\infty^n\]
be a generator for the period lattice. Then $z_1,z_2,\ldots, z_n$ are algebraically independent over $\bar{K}$.
\end{thm}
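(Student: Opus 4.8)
The plan is to realize the coordinates $z_1,\dots,z_n$ as entries of the specialized inverse rigid analytic trivialization of the prolongation $\rho_{n-1}(\mathfrak{C}^{\otimes n})$, and then to combine Theorem~\ref{thm:conseq-of-abp} with the hypertranscendence of $\omega$. By Proposition~\ref{prop:prolong-dual-tensor-power}, this dual $t$-motive has $\sigma$-matrix $\Phi:=\rho_{[n-1]}((t-\theta)^n)$ and rigid analytic trivialization $\Psi:=\rho_{[n-1]}(\Omega^{n})$.

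The first step is to establish the identification $\bar K(z_1,\dots,z_n)=\bar K(\Psi(\theta))$. Since $\rho_{[n-1]}$ is a ring homomorphism, $\Psi(\theta)^{-1}=\rho_{[n-1]}(\Omega^{-n})(\theta)$ is upper triangular and Toeplitz, with the values $\hd{j}{\Omega^{-n}}(\theta)$, $0\le j\le n-1$, as its entries; note $\Omega^{-n}(\theta)=(-\tilde{\pi})^{\,n}\neq 0$. The content of this step is to show that these $n$ Taylor coefficients of $\Omega^{-n}$ at $t=\theta$ are, up to a common factor in $\bar K^\times$ and reordering, exactly the coordinates of a period-lattice generator of $C^{\otimes n}$, recovering the classical value $z_n\sim\tilde{\pi}^{\,n}$ in the top slot. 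I expect this to be the main obstacle, as it requires unwinding Anderson's correspondence between $C^{\otimes n}$ and $\mathfrak{C}^{\otimes n}$ and matching the exponential/period data of the $t$-module with the specialization of $\Psi$. Once it is in place, $\bar K(z_1,\dots,z_n)=\bar K(\Psi(\theta)^{-1})=\bar K(\Psi(\theta))$ follows, since inverting a matrix over a field does not change the generated field.

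Next I would apply Theorem~\ref{thm:conseq-of-abp}. Its hypotheses hold: $\Phi$ is upper triangular with diagonal $(t-\theta)^n$, so $\det(\Phi)=(t-\theta)^{n^2}$ with $s=n^2\ge 1$; the entries of $\Psi$ lie in $\mathbb{E}$ because $\Omega\in\mathbb{E}$ and hyperdifferentiation preserves $\mathbb{E}$, while $\Omega$ is a unit in $\TT$ (its zeros $t=\theta^{q^j}$ satisfy $|t|_\infty>1$), whence $\Psi\in\GL_n(\TT)$; and by Proposition~\ref{prop:prolong-dual-tensor-power} together with $(\Omega^n)^\sigma=(t-\theta)^n\Omega^n$ we have $\sigma(\Psi)=\Psi\,\Phi$. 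The theorem then yields
\[
 \operatorname{trdeg}_{\bar K}\bar K(\Psi(\theta))=\operatorname{trdeg}_{\bar K(t)}\bar K(t)(\Psi).
\]

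It remains to compute the right-hand side, where $\bar K(t)(\Psi)=\bar K(t)\big(\hd{0}{\Omega^{n}},\dots,\hd{n-1}{\Omega^{n}}\big)$. I would show this field equals $\bar K(t)\big(\Omega,\hd{1}{\Omega},\dots,\hd{n-1}{\Omega}\big)$: the inclusion $\subseteq$ is clear since each $\hd{j}{\Omega^n}$ is a polynomial in $\Omega,\dots,\hd{j}{\Omega}$, and for $\supseteq$ I argue by induction that each $\hd{j}{\Omega}$ is algebraic over $F:=\bar K(t)\big(\hd{0}{\Omega^{n}},\dots,\hd{n-1}{\Omega^{n}}\big)$. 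Indeed $\Omega$ is algebraic over $F$ as a root of $X^n-\Omega^n$, and the Leibniz rule gives $\hd{j}{\Omega^n}=n\,\Omega^{n-1}\hd{j}{\Omega}+R_j$, where $R_j$ is a polynomial in $\Omega,\hd{1}{\Omega},\dots,\hd{j-1}{\Omega}$; since $n$ is prime to $q$ it is prime to $p=\ch(\FF_q)$, so $n\ne 0$ in $\FF_q$ and one solves for $\hd{j}{\Omega}$ over $F$. This is exactly where the hypothesis that $n$ is prime to $q$ enters. Finally, because $\Omega=\tfrac{1}{(t-\theta)\omega}$, the field $\bar K(t)\big(\Omega,\dots,\hd{n-1}{\Omega}\big)$ coincides with $\bar K(t)\big(\omega,\hd{1}{\omega},\dots,\hd{n-1}{\omega}\big)$, which has transcendence degree $n$ over $\bar K(t)$ by Theorem~\ref{thm:hypertranscendence}. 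Combining the displayed equalities gives $\operatorname{trdeg}_{\bar K}\bar K(z_1,\dots,z_n)=n$, i.e.\ $z_1,\dots,z_n$ are algebraically independent over $\bar K$.
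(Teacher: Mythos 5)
Your proposal is correct and follows essentially the same route as the paper: prolongation $\rho_{n-1}(\mathfrak{C}^{\otimes n})$ with trivialization $\Psi=\rho_{[n-1]}(\Omega^n)$, Theorem~\ref{thm:conseq-of-abp}, and reduction to the hypertranscendence of $\omega$ (Theorem~\ref{thm:hypertranscendence}). The one step you flag as the main obstacle is exactly the paper's Lemma~\ref{lem:description-of-coordinates}, and it is less of an obstacle than you fear: the paper does not unwind Anderson's correspondence from scratch, but normalizes the generator so that $z_n=\pitilde^n$ and quotes Anderson--Thakur \cite[Cor.~2.5.8]{ga-dt:tpcmzv}, which says $z_i=(-1)^n c_{-i}$ for the Laurent expansion $\omega^n=\sum_{j\geq -n} c_j(t-\theta)^j$; the identity $z_i=(-1)^n\hd{n-i}{(t-\theta)^n\omega^n}|_{t=\theta}$ then follows from the elementary fact that hyperderivatives of $(t-\theta)^n\omega^n$ evaluated at $t=\theta$ read off Laurent coefficients (your ``up to a common factor and reordering'' is in fact just the sign $(-1)^n$ and the reversal $i\mapsto n-i$). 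The only other divergence is cosmetic: where you solve $\hd{j}{\Omega^n}=n\,\Omega^{n-1}\hd{j}{\Omega}+R_j$ by Leibniz-rule induction, the paper's Lemma~\ref{lem:finite-extension} writes $1=ap^s+bn$ with $p^s>k$ and uses that $\rho_{[k]}(f^{p^s})$ is a scalar matrix; both arguments use precisely that $n$ is prime to $p$ and both yield that the field generated by $\Omega,\hd{1}{\Omega},\ldots,\hd{n-1}{\Omega}$ is finite over $F$, hence of the same transcendence degree.
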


\begin{rem}
As already noted in \cite{ga-dt:tpcmzv}, if $n$ is a power of the characteristic $p=\ch(\FF_q)$,
then all but the last coordinate are $0$. We will make a precise statement in the case that $p$ divides $n$ at the end of this section.
\end{rem}

For proving the theorem, we first give a formula for these coordinates using 
evaluations of hyperderivatives.

\begin{lem}\label{lem:description-of-coordinates}
Let the generator above be chosen such that $z_n=\tilde{\pi}^n$.
Then the coordinates $z_1,z_2,\ldots, z_n$ fulfill the equalities
\[ z_i= (-1)^n \hd{n-i}{\phantom{\Big( \!\!\!} (t-\theta)^n\omega(t)^{n}}|_{t=\theta}, \]
i.e.~$z_i$ is the $(n-i)$-th hyperderivative of the function $(\theta-t)^n\omega(t)^{n}$ evaluated
at $t=\theta$.
\end{lem}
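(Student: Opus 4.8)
The plan is to recover the coordinates $z_1,\dots,z_n$ from the \emph{Anderson generating function} of the period $\lambda=\transp{(z_1,\dots,z_n)}$ of $E:=C^{\otimes n}$, and then to read the asserted hyperderivative formula off the nilpotent shape of the Lie action. I would work in the standard presentation $E\isom\GG_a^n$ in which $\phi_t=A_0+A_1\tau$ with $A_0=\theta\one_n+N$, where $N$ is the nilpotent matrix with ones on the superdiagonal (so $A_0-\theta\one_n=N$ is nilpotent, as a $t$-module requires), and $A_1$ the single $\tau$-term in the lower-left corner. Writing $\exp_E=\sum_{k\geq0}Q_k(\cdot)^{(q^k)}$ for the exponential (with $Q_0=\one_n$) and using $\phi_t\circ\exp_E=\exp_E\circ A_0$, I set
\[ \mathcal{G}_\lambda(t):=\sum_{j\geq0}\exp_E\!\left(A_0^{-(j+1)}\lambda\right)t^j. \]
A telescoping computation gives $\phi_t(\mathcal{G}_\lambda)-t\,\mathcal{G}_\lambda=\exp_E(\lambda)$, and since $\lambda$ is a period the right-hand side vanishes, so $\phi_t(\mathcal{G}_\lambda)=t\,\mathcal{G}_\lambda$.

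Writing $\mathcal{G}_\lambda=\transp{(G_1,\dots,G_n)}$, the equation $\phi_t(\mathcal{G}_\lambda)=t\,\mathcal{G}_\lambda$ becomes $((t-\theta)\one_n-N)\mathcal{G}_\lambda=A_1\mathcal{G}_\lambda^{\tau}$; its upper rows give $G_{i+1}=(t-\theta)G_i$, hence $G_i=(t-\theta)^{i-1}G_1$, while the last row gives $(t-\theta)G_n=G_1^{\tau}$, whence the scalar difference equation $G_1^{\tau}=(t-\theta)^nG_1$. Since $\omega^\tau=(t-\theta)\omega$ forces $(\omega^n)^\tau=(t-\theta)^n\omega^n$, the quotient $G_1/\omega^n$ is $\tau$-invariant and $G_1=c\,\omega^n$ for a constant $c$. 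On the other hand, because $A_0=\theta\one_n+N$ is a constant matrix, the only part of $\mathcal{G}_\lambda$ contributing a pole at $t=\theta$ is its linear ($k=0$) part $-\big((t-\theta)\one_n-N\big)^{-1}\lambda=-\sum_{m=0}^{n-1}N^m(t-\theta)^{-(m+1)}\lambda$; the higher terms $Q_k(\cdot)^{(q^k)}$ only produce poles at $t=\theta^{q^k}$ with $k\geq1$. Reading off the first coordinate and using $(N^m\lambda)_1=z_{1+m}$ then identifies the coefficient of $(t-\theta)^{-i}$ in $G_1$ with $-z_i$.

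It remains to combine the two descriptions of $G_1$. From $(t-\theta)^n\omega^n=\Omega^{-n}$, which is regular at $t=\theta$ with Taylor coefficients $\hd{m}{\Omega^{-n}}|_{t=\theta}$, the coefficient of $(t-\theta)^{-i}$ in $\omega^n$ is $\hd{n-i}{(t-\theta)^n\omega^n}|_{t=\theta}$, so comparison with $G_1=c\,\omega^n$ gives $z_i=-c\,\hd{n-i}{(t-\theta)^n\omega^n}|_{t=\theta}$. Finally the constant is pinned down by the single normalization: since $\Omega(\theta)=-1/\pitilde$ gives $\Omega^{-n}(\theta)=(-1)^n\pitilde^{\,n}$, imposing $z_n=\pitilde^{\,n}$ forces $-c=(-1)^n$ and hence $z_i=(-1)^n\,\hd{n-i}{(t-\theta)^n\omega^n}|_{t=\theta}$, which is the claim (note $(\theta-t)^n=(-1)^n(t-\theta)^n$). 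The main obstacle is the middle paragraph: proving rigorously that the pole of $\mathcal{G}_\lambda$ at $t=\theta$ recovers exactly the period vector $\lambda$ — i.e. the residue/principal-part computation together with the check that the higher Frobenius corrections contribute poles only at the conjugates $\theta^{q^k}$, $k\geq1$ — and keeping the indexing and signs coming from $N$ consistent throughout.
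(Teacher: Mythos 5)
Your argument is correct in substance, but it takes a genuinely different route from the paper. The paper's own proof is essentially a citation plus a translation: it invokes Anderson--Thakur (Cor.~2.5.8 of the tensor-power paper) for the identity $z_i=(-1)^n c_{-i}$, where $\omega^n=\sum_{j\geq -n}c_j(t-\theta)^j$ is the Laurent expansion at $t=\theta$, and then observes --- exactly as in your final paragraph --- that $c_{-i}=\hd{n-i}{(t-\theta)^n\omega^n}\big|_{t=\theta}$, since $\hd{n-i}{(t-\theta)^{j+n}}\big|_{t=\theta}$ vanishes unless $j=-i$. What you do differently is to rederive the Anderson--Thakur input from scratch via the Anderson generating function: the telescoping identity $\phi_t(\mathcal{G}_\lambda)=t\,\mathcal{G}_\lambda$, the reduction via $\phi_t=\theta\one_n+N+A_1\tau$ to $G_i=(t-\theta)^{i-1}G_1$ and $G_1^\tau=(t-\theta)^nG_1$, and the resummation $\mathcal{G}_\lambda(t)=\sum_{k\geq0}Q_k\bigl((\theta^{q^k}-t)\one_n+N\bigr)^{-1}\lambda^{(q^k)}$, whose principal part at $t=\theta$ comes only from the $k=0$ layer and identifies the coefficient of $(t-\theta)^{-i}$ in $G_1$ as $-z_i$. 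This is a legitimate and standard derivation (it is how such identities are proved in the generating-function literature); it buys self-containedness at the price of the analytic bookkeeping you yourself flag (justifying the rearrangement and the meromorphic continuation, and checking that the $k\geq1$ layers only have poles at $\theta^{q^k}$), whereas the paper gets the identity $z_i=(-1)^nc_{-i}$ for free from the literature.

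One step is stated too strongly and needs a patch. From $(G_1/\omega^n)^\tau=G_1/\omega^n$ you conclude $G_1=c\,\omega^n$ with $c$ a \emph{constant}, but the $\tau$-fixed elements of $\TT$ are $\FF_q[t]$, not $\CC_\infty$: since $\omega$ is a unit of $\TT$ (it has no zeros, and its poles $\theta^{q^j}$ all lie outside $|t|_\infty\leq1$), what you get a priori is $G_1=P(t)\,\omega^n$ with $P\in\FF_q[t]$, and a nonconstant $P$ would mix its Taylor coefficients at $\theta$ into the principal-part comparison, spoiling the coefficientwise identification for $i<n$. The fix lives inside your own normalization step: the top coefficient at $(t-\theta)^{-n}$ involves only $P(\theta)c_{-n}$ regardless of $\deg P$, and since $c_{-n}=\bigl((t-\theta)\omega\bigr)^n\big|_{t=\theta}=\Omega(\theta)^{-n}=(-1)^n\pitilde^{\,n}$, matching against $-z_n=-\pitilde^{\,n}$ gives $P(\theta)=(-1)^{n+1}\in\FF_q$; as $\theta$ is transcendental over $\FF_q$, a polynomial with $\FF_q$-coefficients taking a value in $\FF_q$ at $\theta$ must be constant, so $P\equiv(-1)^{n+1}$ and the remaining coefficients yield $z_i=(-1)^nc_{-i}$, signs included. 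With this repair (and the continuation estimates you already identify as the main obstacle), your proof is complete and proves exactly the lemma.
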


\begin{proof}
As $\omega$ has a pole of order $1$ at $t=\theta$, $\omega^n$ has a pole of order $n$.
Building on work of Anderson and Thakur \cite[\S 2.5]{ga-dt:tpcmzv}, we write
$\omega^n$ as a Laurent series in $(t-\theta)$,
\[ \omega^n=\sum_{j=-n}^\infty c_j (t-\theta)^j\in \CC_\infty\ls{t-\theta}. \]
Then the coordinates are explicitly given by
\[ z_i= (-1)^n c_{-i} \]
for $i=1,\ldots, n$ (see \cite[Cor.~2.5.8]{ga-dt:tpcmzv}, and be aware that $\bar{\pi}$ ibid. equals $-\pitilde$).

On the other hand, for any $0\leq k\leq n$:
\begin{eqnarray*}
\hd{k}{\phantom{\Big( \!\!\!} (t-\theta)^n\omega(t)^{n}} &=&
\hd{k}{ \sum_{j=-n}^\infty c_j (t-\theta)^{j+n}} \\
&=& \sum_{j=k-n}^\infty c_j \binom{j+n}{k} (t-\theta)^{j+n-k}.
\end{eqnarray*}
Hence for $i=1,\ldots, n$:
\begin{eqnarray*}
   (-1)^n \hd{n-i}{\phantom{\Big( \!\!\!} (t-\theta)^n\omega(t)^{n}}|_{t=\theta}
&=&  (-1)^n  \sum_{j=-i}^\infty c_j \binom{j+n}{n-i} (t-\theta)^{j+i}|_{t=\theta} \\
&=&  (-1)^n c_{-i} =z_i. 
\end{eqnarray*}
\end{proof}

A second ingredient is a relation between hyperderivatives of functions and hyperderivatives of
powers of that function.

\begin{lem}\label{lem:finite-extension}
Let $0\neq f\in \laurent$, $k\geq 0$ and let $E$ be the field extension of $K(t)$ generated by the entries of $\rho_{[k]}(f)$, i.e.~generated by $f,\hd{1}{f},\ldots, \hd{k}{f}$.
For $n\in \NN$ prime to $q$, let $F$ be the field extension of $K(t)$ generated by the entries of $\rho_{[k]}(f^n)$, i.e.~generated by $f^n,\hd{1}{f^n},\ldots, \hd{k}{f^n}$.
Then $E$ is generated over $F$ by $f$, and in particular, $E$ is finite algebraic over $F$.
\end{lem}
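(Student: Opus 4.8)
The plan is to prove the single nontrivial inclusion $E \subseteq F(f)$; the reverse inclusion and the finiteness statement will then be immediate. The engine is the fact, recorded earlier, that $\rho_{[k]}$ is a ring homomorphism, so that $\rho_{[k]}(f^n)=\rho_{[k]}(f)^n$. Comparing the entry on the $l$-th superdiagonal of both sides yields the higher Leibniz relation
\[
\hd{l}{f^n}=\sum_{i_1+\cdots+i_n=l}\hd{i_1}{f}\cdots\hd{i_n}{f},\qquad 0\le l\le k .
\]
I would read this as an expression of the \emph{known} quantity $\hd{l}{f^n}$, which lies in $F$, in terms of the \emph{unknowns} $\hd{i}{f}$, and then solve for $\hd{l}{f}$ recursively in $l$.

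First I would separate off the terms in which a single index equals $l$: there are exactly $n$ of them, each equal to $f^{n-1}\hd{l}{f}$, while every remaining term is a product of hyperderivatives $\hd{i}{f}$ with all $i\le l-1$. Thus
\[
\hd{l}{f^n}=n\,f^{n-1}\,\hd{l}{f}+R_l ,
\]
where $R_l$ is a polynomial over $\FF_p$ in $f,\hd{1}{f},\ldots,\hd{l-1}{f}$. Here the hypotheses enter decisively: since $n$ is prime to $q$, hence to $p$, the scalar $n$ is a unit in $K$; and since $f\ne 0$ is invertible in $\laurent$ with $f^n\in F$, the factor $f^{1-n}=f\cdot (f^n)^{-1}$ lies in $F(f)$. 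Solving gives
\[
\hd{l}{f}=\tfrac{1}{n}\,f^{1-n}\bigl(\hd{l}{f^n}-R_l\bigr).
\]
By induction on $l$ (with the trivial base case $\hd{0}{f}=f\in F(f)$), assuming $\hd{i}{f}\in F(f)$ for $i<l$ makes $R_l\in F(f)$, and since $\hd{l}{f^n}\in F$ the right-hand side lies in $F(f)$; hence $\hd{l}{f}\in F(f)$. This proves $E\subseteq F(f)$.

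For the reverse inclusion I would observe that the same Leibniz relation exhibits each generator $\hd{l}{f^n}$ of $F$ as a polynomial in $f,\hd{1}{f},\ldots,\hd{l}{f}$, so $F\subseteq E$ and therefore $F(f)\subseteq E$, giving $E=F(f)$. Finally $f$ is a root of $X^n-f^n\in F[X]$, so $E=F(f)$ is algebraic of degree dividing $n$ over $F$, in particular finite. The only genuine obstacle is the middle step, namely being sure that the leading coefficient $n\,f^{n-1}$ in the recursion is invertible in $F(f)$; this is precisely what the two hypotheses $\gcd(n,q)=1$ and $f\ne 0$ guarantee, and the rest is the routine bookkeeping of disentangling the Leibniz sum.
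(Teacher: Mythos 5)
Your proof is correct, but it takes a genuinely different route from the paper's. You solve the higher Leibniz recursion by induction on $l$: comparing the $l$-th superdiagonal of $\rho_{[k]}(f^n)=\rho_{[k]}(f)^n$, isolating the $n$ ordered tuples having one index equal to $l$, and dividing by the leading coefficient $n\,f^{n-1}$ --- legitimate because $\gcd(n,q)=1$ makes the residue of $n$ a unit in $\FF_p$, and because $f\ne 0$ is invertible in the field $\laurent$ with $f^{1-n}=f\cdot(f^n)^{-1}\in F(f)$. The paper avoids induction entirely with a characteristic-$p$ trick: choosing $s$ with $p^s>k$, the identity $\mathcal{D}(f^{p^s})=\mathcal{D}(f)^{p^s}$ forces $\hd{j}{f^{p^s}}=0$ for $1\le j\le k$, so $\rho_{[k]}(f^{p^s})$ is the scalar matrix with diagonal entries $f^{p^s}$; writing $1=ap^s+bn$ by B\'ezout then yields the closed-form identity $\rho_{[k]}(f)=(f^{p^s})^a\cdot\rho_{[k]}(f^n)^b$, whose entries manifestly lie in $F(f)$. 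Both arguments consume the hypothesis $\gcd(n,q)=1$, but in different places: yours through invertibility of $n$ modulo $p$, the paper's through B\'ezout against $p^s$. Your version is more elementary, produces explicit recursive formulas for the $\hd{l}{f}$ in terms of elements of $F$ and $f$, and works verbatim in characteristic zero for any $n\ne 0$; the paper's is shorter and exploits Frobenius in an essential way. One cosmetic slip at the end: from $X^n-f^n\in F[X]$ you may conclude $[E:F]\le n$, but not in general that the degree \emph{divides} $n$ (a root of $X^n-a$ can have minimal polynomial of degree not dividing $n$, e.g.\ $\zeta_9$ satisfies $X^9-1$ over $\mathbb{Q}$ yet has degree $6$); this does not affect the finiteness assertion of the lemma.
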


\begin{proof}
We only have to show that $\hd{j}{f}\in F(f)$ for $1\leq j\leq k$.
Let $p=\ch(\FF_q)$, and $s\in \NN$ such that $p^s>k$. Then for 
all $j$ not divisible by $p^s$, one has $\hd{j}{f^{p^s}}=0$, since
\[ \mathcal{D}(f^{p^s})=\mathcal{D}(f)^{p^s} \]
is a power series in $X^{p^s}$. In particular, we have  $\hd{j}{f^{p^s}}=0$ for all 
$1\leq j\leq k$, and therefore $\rho_{[k]}(f^{p^s})$ is the scalar matrix with diagonal
entries equal to $f^{p^s}$.

As $n$ was prime to $q$, and hence prime to $p$, there are $a,b\in \ZZ$ such that
$ap^s+bn=1$.
Therefore,
\[ \rho_{[k]}(f)=\rho_{[k]}(f^{ap^s+bn})
= \rho_{[k]}(f^{p^s})^a\cdot \rho_{[k]}(f^n)^b
=(f^{p^s})^a\cdot \rho_{[k]}(f^n)^b \]
has entries in $F(f)$.
\end{proof}

\begin{proof}[Proof of Thm.~\ref{thm:algebraic-independence}]
By Prop.~\ref{prop:prolong-dual-tensor-power},
a rigid analytic trivialization of $\rho_{n-1}(\mathfrak{C}^{\otimes n})$,
the $(n-1)$-th prolongation of $\mathfrak{C}^{\otimes n}$, is given by the matrix
\[ \rho_{[n-1]}(\Omega^n)=  \begin{pmatrix}
\Omega^n & \hd{1}{\Omega^n}  & \hd{2}{\Omega^n} & \cdots&\hd{n-1}{\Omega^n} \\
0 & \Omega^n & \hd{1}{\Omega^n} &  \ddots   & \vdots \\ 
\vdots &\ddots  & \ddots & \ddots &   \hd{2}{\Omega^n}\\
\vdots & & \ddots  & \Omega^n &  \hd{1}{\Omega^n} \\
0 &  \cdots & \cdots & 0 &  \Omega^n
\end{pmatrix}.  \]
Let $F$ be the field generated by the entries of $\rho_{[n-1]}(\Omega^n)$ over $\bar{K}(t)$.
As,
\[ \rho_{[n-1]}(\Omega^n)= \rho_{[n-1]}\left( (t-\theta)^{-n}\omega(t)^{-n}\right)
= \left( \rho_{[n-1]} (t-\theta)\right)^{-n} \cdot  \left(\rho_{[n-1]}(\omega^n)\right)^{-1} \]
and $ \rho_{[n-1]} (t-\theta)\in \GL_n(K(t))$, the field $F$ is also generated over $\bar{K}(t)$ by
the entries of $\rho_{[n-1]}(\omega^{n})$, and in particular is a subfield of finite index of the field generated by the entries of $\rho_{[n-1]}(\omega)$, as shown in Lemma \ref{lem:finite-extension}.
Since $\omega$ is hypertranscendental (see Theorem \ref{thm:hypertranscendence}), 
the latter has transcendence degree over $\bar{K}(t)$ equal to $n$. Hence, the field $F$ has transcendence degree $n$ over $\bar{K}(t)$.

Let $L$ be the field extension of $\bar{K}$ generated by the entries of 
$\rho_{[n-1]}(\Omega^n)|_{t=\theta}$.
Then by the proof of \cite[Thm. 5.2.2]{mp:tdadmaicl} (see Thm.~\ref{thm:conseq-of-abp}), the transcendence degree of $L/\bar{K}$ is the same as the transcendence degree of $F/\bar{K}(t)$, i.e.~equals $n$. 
On the other hand, $L$ is also generated as a field by the entries of the inverse
of $\rho_{[n-1]}(\Omega^n)|_{t=\theta}$, and using Lemma \ref{lem:description-of-coordinates}, we get
\begin{eqnarray*}
\left(\rho_{[n-1]}(\Omega^n)|_{t=\theta}\right)^{-1}
&=& \left( \rho_{[n-1]}(\Omega^n)^{-1}\right)|_{t=\theta}
=\rho_{[n-1]}(\Omega^{-n})|_{t=\theta} \\
&=& \rho_{[n-1]}((t-\theta)^n\omega^n)|_{t=\theta}\\
&=& (-1)^n \cdot\begin{pmatrix}
  z_n &   z_{n-1}  &   z_{n-2} & \cdots &  z_1 \\
0 &   z_n &    z_{n-1} &  \ddots   & \vdots \\ 
\vdots &\ddots  & \ddots & \ddots &     z_{n-2}\\
\vdots & & \ddots  &    z_n &    z_{n-1} \\
0 &  \cdots & \cdots & 0 &    z_n
\end{pmatrix}.
\end{eqnarray*}
Hence, $z_1,\ldots, z_n$ are algebraically independent over $\bar{K}$.
\end{proof}

In the case that the characteristic $p$ divides $n$, we can also make a precise statement
on the algebraic independence.

\begin{cor}
Let $n\in\NN$ be arbitrary, let $C^{\otimes n}$ be the $n$-th tensor power of the Carlitz module and let
\[ \begin{pmatrix} z_1 \\ \vdots \\ z_n \end{pmatrix} \in \CC_\infty^n\]
be the generator for the period lattice with $z_n=\pitilde^n$. If $p^s$ is the exact power of
$p$ dividing $n$, then $z_i\neq 0$ precisely, when $p^s$ divides $i$, and all nonzero coordinates are algebraically independent over $\bar{K}$.
\end{cor}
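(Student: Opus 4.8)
The plan is to reduce the general case, where $p^s \mid n$ with $p^s$ the exact power of $p$ dividing $n$, to the prime-to-$q$ case already settled in Theorem~\ref{thm:algebraic-independence}. The key structural fact is that the hyperdifferential operators $\hd{j}{\cdot}$ annihilate $p^s$-th powers except in degrees divisible by $p^s$: concretely, since $\mathcal{D}(f^{p^s}) = \mathcal{D}(f)^{p^s}$ is a power series in $X^{p^s}$, one has $\hd{j}{g^{p^s}} = 0$ whenever $p^s \nmid j$. Writing $n = p^s m$ with $m$ prime to $q$, I would apply this to $g = (t-\theta)^m\omega^m$, so that the nonzero hyperderivatives of $(t-\theta)^n\omega^n = \bigl((t-\theta)^m\omega^m\bigr)^{p^s}$ occur only in degrees that are multiples of $p^s$.

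First I would invoke Lemma~\ref{lem:description-of-coordinates}, which (with the generator normalized so that $z_n = \pitilde^n$) gives $z_i = (-1)^n \hd{n-i}{(t-\theta)^n\omega^n}|_{t=\theta}$. Combining this with the vanishing statement above, $z_i = 0$ unless $p^s \mid (n-i)$; since $p^s \mid n$, this is equivalent to $p^s \mid i$, which establishes the vanishing/nonvanishing dichotomy claimed in the corollary. This pins down the nonzero coordinates as exactly $z_{p^s}, z_{2p^s}, \ldots, z_{mp^s} = z_n$.

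Next I would identify the nonzero coordinates with the full coordinate set of the period lattice generator for the prime-to-$q$ tensor power $C^{\otimes m}$. Using $\hd{j}{g^{p^s}}|_{t=\theta}$ with $j = \ell p^s$ and the relation $\hd{\ell p^s}{g^{p^s}} = \hd{\ell}{g}^{p^s}$ (again from $\mathcal{D}(g^{p^s}) = \mathcal{D}(g)^{p^s}$, reading off the coefficient of $X^{\ell p^s}$), I would match $z_{i}$ for $i = \ell p^s$ against the $p^s$-th power of the corresponding coordinate $z'_{m - \ell + 1}$ of the $C^{\otimes m}$ period lattice, which by Lemma~\ref{lem:description-of-coordinates} applied to $m$ equals $(-1)^m\hd{m-(m-\ell+1)}{(t-\theta)^m\omega^m}|_{t=\theta} = (-1)^m \hd{\ell-1}{g}|_{t=\theta}$. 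A careful index bookkeeping then yields that each nonzero $z_i$ is, up to sign and the Frobenius $p^s$-power, one of the coordinates $z'_1, \ldots, z'_m$.

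Finally, Theorem~\ref{thm:algebraic-independence} gives that $z'_1, \ldots, z'_m$ are algebraically independent over $\bar K$ (as $m$ is prime to $q$). Since raising to the $p^s$-th power is a field automorphism of $\CC_\infty$ fixing $\FF_q$ and is injective, and since algebraic independence is preserved under applying an injective ring homomorphism that is algebraic over the base (here Frobenius is purely inseparable, so $\bar K$-algebraic independence transfers between a quantity and its $p^s$-power), the $p^s$-th powers $(z'_1)^{p^s}, \ldots, (z'_m)^{p^s}$ — hence the nonzero coordinates $z_{p^s}, \ldots, z_n$ — remain algebraically independent over $\bar K$. \emph{The main obstacle} I anticipate is the index-matching in the third step together with verifying that Frobenius genuinely preserves algebraic independence over $\bar K$ rather than merely over $\FF_q(t)$; the cleanest way around the latter is to note that $\bar K$ is perfect, so the $p^s$-power map is an automorphism of $\bar K$ and of any field extension, under which a transcendence basis maps to a transcendence basis.
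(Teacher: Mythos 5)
Your proposal is correct and takes essentially the same route as the paper: both rest on the identities $\hd{j}{f^{p^s}}=0$ for $j$ not divisible by $p^s$ and $\hd{\ell p^s}{f^{p^s}}=\bigl(\hd{\ell}{f}\bigr)^{p^s}$, and both reduce to Theorem~\ref{thm:algebraic-independence} for $m=n/p^s$ (the paper phrases the computation via the inverse of $\rho_{[n-1]}(\Omega^n)$ specialized at $t=\theta$, you via Lemma~\ref{lem:description-of-coordinates} directly, and you make explicit the step the paper leaves implicit, namely that the $p^s$-power Frobenius preserves algebraic independence over the perfect field $\bar{K}$). One slip in your index bookkeeping: with $g=(t-\theta)^m\omega^m$ one has $z_{\ell p^s}=(-1)^n\hd{(m-\ell)p^s}{g^{p^s}}|_{t=\theta}=(-1)^n\bigl((-1)^m z'_{\ell}\bigr)^{p^s}=(z'_{\ell})^{p^s}$, so the matching is $z_{\ell p^s}=(z'_{\ell})^{p^s}$ rather than $(z'_{m-\ell+1})^{p^s}$ --- as the normalization check $z_n=(z'_m)^{p^s}=\pitilde^n$ confirms --- but this reversal is harmless for the corollary, since only the set $\{z'_1,\ldots,z'_m\}$ enters the independence conclusion.
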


\begin{proof}
The hyperdifferential operators on $\laurent$ satisfy
\[   \hd{i}{f^{p^s}}=\left\{ \begin{array}{cl}
 0 & \text{if }p^s\text{ does not divide } i\\
\left(\hd{i/p^s}{f}\right)^{p^s} & \text{if }p^s\text{ divides }i,
\end{array}\right.
\]
for all $f\in \laurent$, as one readily sees by using the homomorphism $\mathcal{D}$.

Applying this to $f=\Omega^{n/p^s}$, we see that the nonzero entries in
$\rho_{[n-1]}(\Omega^n)$ are the $\hd{i}{\Omega^n}$ with $p^s$ divides $i$ and those are
equal to $\left(\hd{i/p^s}{\Omega^{n/p^s}}\right)^{p^s}$.

By specializing the inverse of $\rho_{[n-1]}(\Omega^n)$ to $t=\theta$ as in the proof of Theorem \ref{thm:algebraic-independence}, we see that the coordinates $z_i$ where $p^s$ does not divide $i$ are equal to zero, and that the other coordinates are just the $p^s$-powers of the coordinates of a period lattice generator for the $n/p^s$-th tensor power of the Carlitz module.
Hence, by Theorem \ref{thm:algebraic-independence}, they are algebraically independent over $\bar{K}$.
\end{proof}

\bibliographystyle{plain}
\def\cprime{$'$}


\vspace*{.5cm}

\parindent0cm

\end{document}